\documentclass[a4]{article}
\usepackage{amsmath,amssymb,amsthm,mathtools}
\usepackage{comment}
\usepackage[utf8]{inputenc}
\usepackage[margin=1.5in]{geometry}
\usepackage{hyperref}
\hypersetup{colorlinks=true,linkcolor=blue,citecolor=blue,urlcolor=blue}
\usepackage{geometry}
\usepackage{amsmath, amssymb, amsthm, amsfonts}
\usepackage{mathtools}
\usepackage{physics}
\usepackage{tikz-cd}
\usepackage{enumitem}
\usepackage[
backend=biber,
style=alphabetic,
sorting=nyt
]{biblatex}
\theoremstyle{definition}
\newtheorem{definition}{Definition}
\newtheorem{question}{Question}
\newtheorem{example}{Example}
\newtheorem{remark}{Remark}

\theoremstyle{plain}
\newtheorem{theorem}{Theorem}

\newtheorem{lemma}{Lemma}
\newtheorem{proposition}{Proposition}

\newcommand{\D}{\mathbb{D}}
\newcommand{\Dbar}{\overline{\mathbb{D}}}
\newcommand{\Dbarstar}{\overline{\mathbb{D}}_*}
\newcommand{\Z}{\mathbb{Z}}
\newcommand{\R}{\mathbb{R}}

\newcommand{\A}{\mathcal{A}}

\newcommand{\psu}{\text{PSU}(1,1)}
\newcommand{\psun}{\text{PSU}^n(1,1)}
\newcommand{\slR}{\mathfrak{sl}_2(\mathbb{R})}

\newcommand{\An}{\mathcal{A}_n}
\newtheorem{maintheorem}{Theorem}

\title{Lie groupoid integration of singular isometries of  the Poincaré disk}
\author{Rea Dalipi}
\date{}

\begin{document}
\maketitle

\begin{abstract}
For every $n \geq 1$ there is a distinguished $\mathfrak{sl}_2(\mathbb{R})$ action on the Poincaré disk, arising as the infinitesimal isometries of a hyperbolic metric with conical singularity of order $n-1$ at the origin. For $n=1$ this is the standard infinitesimal Möbius action, and for $n>1$ these vector fields have singularities at the origin and are incomplete, preventing integration to a global Lie group action.
However, they naturally define an action Lie algebroid $\mathcal{A}_n=\mathfrak{sl}_2(\mathbb{R})\ltimes \Dbarstar $ over the punctured disk. We construct an explicit Lie groupoid $\mathcal{G}_n$ integrating $\mathcal{A}_n$ and compare it to the \v Severa-Weinstein groupoid. Although $\mathcal{G}_n$ is not an action groupoid, its restriction to the boundary recovers an $n$-fold Möbius action on the boundary circle.
\end{abstract}

\tableofcontents

\section{Introduction and main results}
The Virasoro group and its coadjoint orbits have long played a prominent role in mathematics and physics, notably in the study of two-dimensional conformal field theories \cite{Alekseev:1988ce,Rai:1989js}. More recently, they have reappeared in the correspondence between Jackiw--Teitelboim gravity on the disk and the Schwarzian quantum mechanics on the boundary circle \cite{saad2019jtgravitymatrixintegral}. A key result in this direction is the link between Virasoro coadjoint orbits and moduli spaces of hyperbolic metrics, explored in detail in \cite{AM24}. The authors proved that there is a symplectomorphism between the moduli space of hyperbolic metrics on the disk and Virasoro's first exceptional orbit:
\begin{equation}\label{virasoro}
   \text{Hyp}(\D)/\text{Diff}^+(\D, \partial \D) \cong \mathcal{O}_{T=\frac{1}{2}}.
\end{equation}
Naturally, one is led to ask whether such a symplectomorphism can be generalized to the higher exceptional orbits corresponding to $T=\frac{n^2}{2}$. It turns out \cite{alexeevdalipishatashvili-exceptionalorbits} that in order to reproduce the higher orbits one should replace the smooth hyperbolic metrics on the left-hand side of  \eqref{virasoro} by hyperbolic metrics with prescribed singularities.  Studying this generalized setting, we encountered the issue of determining the global isometries of such singular metrics, and answering this question constitutes the goal of this manuscript.

Consider the Poincaré disk $\D=\{ z \in \mathbb{C}, |z|<1 \}$, with the action of $\psu$ by Möbius transformations, that is for any $g=\begin{pmatrix}
    a & b\\ \bar{b} &\bar{a}
\end{pmatrix},$ $a,b \in \mathbb{C}$:

\begin{equation*}
    g\cdot z=\frac{az+b}{\bar b z+\bar a}.
\end{equation*}
The $\psu$-action extends to the closed Poincaré disk $\Dbar=\{ z \in \mathbb{C}, |z|\leq 1 \}$, and it preserves the standard hyperbolic metric:
\begin{equation}\label{gstd}
g_{\Dbar}=\frac{4|dz|^2}{(1-|z|^2)^2}.
\end{equation}
We are interested in hyperbolic metrics on $\Dbar\setminus \{p\}$, which have a prescribed singularity at $p$. As we will see in more detail in Section \ref{section2}, such singularities arise from holomorphic maps which locally look like  $p_n: z \mapsto z^n$. 
Pulling back the smooth metric in \eqref{gstd} via this branched cover yields 
\begin{equation*}
g_n=p_n^*g_{\Dbar}
=
\frac{4n^2 |z|^{2(n-1)}}{(1-|z|^{2n})^2}|dz|^2 ,
\end{equation*}
which is a smooth metric away from the origin, but becomes singular at $z=0$. Thus $g_n$ defines a metric on the punctured disk $\Dbarstar=\{ z \in \mathbb{C},\ 0< |z|\leq 1 \}$. Note that for $n=1$, we recover the smooth disk with its smooth standard metric \eqref{gstd}.

Given a group action of
$G$ on a surface $\Sigma$, covering theory guarantees that this action lifts to an action of $G^n$ on the $n$-fold cover $\Sigma^n$,  under suitable topological conditions.
 The map $p_n$ is not a covering map of the disk, but a branched one. Therefore, the $\psu$ action does not necessarily lift to a $\psun$ action.  The obstruction becomes clearer as we attempt to explicitly write down the lifted action. One can show (see Section \ref{section2}) that for a fixed $n \in \mathbb{N}$, the multivalued map
\begin{equation*}
g^{(n)}\cdot z
=
\left(\frac{az^n+b}{\bar b z^n+\bar a}\right)^{1/n}, \qquad \abs{z}=1
\end{equation*}
maps the boundary circle into itself and that it has $n$ single-valued branches, uniquely determining a globally defined $\psun$ action on $S^1$. This formula fails to extend to the interior of the disk; there the $n$-th root can only be defined locally. This local action yields the following
$\mathfrak{sl}_2(\mathbb{R})$ vector fields on $\Dbarstar$:
\begin{equation}\label{vf0}
    \begin{aligned}
    E &= \frac{1}{n}(z^{1-n} - z^{1+n})\partial_{z} + (\overline{z}^{1-n} - \overline{z}^{1+n})\partial_{\overline{z}} ,\\
    F &= \frac{i}{n}(z^{1-n} + z^{1+n})\partial_{z} - (\overline{z}^{1-n} +  \overline{z}^{1+n})\partial_{\overline{z}}, \\
    H &= \frac{2i}{n}(z\partial_{z} - \overline{z}\partial_{\overline{z}}).
\end{aligned}
\end{equation}
For $n=1$ (the smooth case), the $\mathfrak{sl}_2(\mathbb{R})$ action integrates to a global $\psu$ action. On the other hand, for $n > 1$, these vector fields are singular at $z=0$, and they do not integrate to a global $\psun$ action on $\Dbarstar$ due to incompleteness. 
 To remedy this issue, we turn towards higher symmetry structures, namely \textit{Lie algebroids and groupoids}. In classical Lie theory, the relationship between Lie groups and Lie algebras is governed by the three integrability theorems of Lie.\footnote{Following the terminology of \cite{Mackenzie_2005}.} 
 
These theorems admit higher generalizations to Lie algebroids and Lie groupoids.

A Lie algebra  action of $\mathfrak g$  on a manifold $M$ defines an \textit{action Lie algebroid}, as the trivial vector bundle
\begin{equation*}
\mathfrak g\ltimes M\longrightarrow M
\end{equation*}
with anchor
\begin{equation*}
\rho:\mathfrak g\ltimes M\longrightarrow TM,
\qquad
(\xi,x)\longmapsto \xi_M(x).
\end{equation*}
Then, given the $\mathfrak{sl}_2(\mathbb{R})$ action of the vector fields in
\eqref{vf0}, we can define the action Lie algebroid
\begin{equation*}
\mathcal{A}_n\coloneq\mathfrak{sl}_2(\mathbb R)\ltimes \Dbarstar
\end{equation*}
with anchor map the vector fields $E,F,H$.

An \textit{integration} of a Lie algebroid $\mathcal{A}\to M$ is a Lie groupoid
\begin{equation*}
\mathcal G\rightrightarrows M
\end{equation*}
whose Lie algebroid is isomorphic to $\mathcal{A}$. Whenever $\mathcal{A}$ is integrable, there is a unique (up to isomorphisms) source-simply connected integration groupoid, namely the \v{S}evera--Weinstein groupoid \(\mathcal G(\mathcal{A})\). In our case, the following theorem guarantees integrability:
\begin{theorem}\cite{DAZORD199777} \label{actionlie}
    Every action Lie algebroid is integrable.
\end{theorem}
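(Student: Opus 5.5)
The plan is to verify integrability through the Crainic--Fernandes criterion: a Lie algebroid is integrable if and only if its monodromy groups are uniformly discrete. The alternative, building the groupoid directly, fails for the obvious candidate. Write $\mathcal{A}=\mathfrak g\ltimes M$ for an action of a finite-dimensional Lie algebra $\mathfrak g$ on $M$. If the vector fields were complete, the action would integrate to an action of the simply connected group $G$ with Lie algebra $\mathfrak g$, and the action groupoid $G\ltimes M$ would do the job. In general the infinitesimal action only integrates to a local action. The paper's own example on $\Dbarstar$ shows exactly this incompleteness, so $G\ltimes M$ is not available and we go through the monodromy instead.

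\textbf{Step 1: describe the isotropy.} At a point $x$, the isotropy Lie algebra is $\mathfrak g_x=\ker(\rho_x)=\{\xi\in\mathfrak g:\ \xi_M(x)=0\}$, which is a subalgebra of $\mathfrak g$. The monodromy group $\mathcal N_x\subset Z(\mathfrak g_x)$ is the image of the boundary map $\partial:\pi_2(L_x)\to G(\mathfrak g_x)$, where $L_x$ is the orbit through $x$.

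\textbf{Step 2: transport the computation to the group.} Here we use that $\mathcal{A}$ is a pullback-type algebroid: the map $\mathfrak g\ltimes M\to\mathfrak g$ is a Lie algebroid morphism to a Lie algebra. The Lie algebra $\mathfrak g$ integrates to the simply connected group $G$, which has $\pi_2(G)=0$.

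An $A$-path is a path $a(t)\in\mathfrak g$ together with a base path $\gamma$ satisfying $\dot\gamma=a(t)_M(\gamma)$. It maps to a $\mathfrak g$-path, and integrating it gives the path $g(t)\in G$ solving $\dot g\,g^{-1}=a(t)$. $A$-homotopies map to $\mathfrak g$-homotopies, and for $\mathfrak g$ these are exactly homotopies of paths in $G$ with fixed endpoints.

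Now take an element of $\mathcal N_x$. It is represented by a loop of isotropy data $a(t)\in\mathfrak g_x$ that is $A$-homotopic to the trivial path through a sphere in $L_x$. Its image in $G$ is then homotopic to the constant path. Combined with the compatibility of the two exponential maps, this shows that $\mathcal N_x$ maps to the trivial element under the homomorphism $G(\mathfrak g_x)\to G$ induced by the inclusion $\mathfrak g_x\hookrightarrow\mathfrak g$. Consequently $\mathcal N_x\subset\ker\big(Z(\mathfrak g_x)\to\mathfrak g\to G\big)$, or more precisely it is contained in the set of $v\in Z(\mathfrak g_x)$ with $\exp_G(v)=e$ that come from loops.

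\textbf{Step 3: uniform discreteness (the main obstacle).} We need a neighbourhood $U$ of $0$ in $\mathfrak g$, independent of $x$, such that $\mathcal N_x\cap U=\{0\}$ for all $x$. Choose $U$ so that $\exp_G$ is injective on $U$. Then any $v\in\mathcal N_x\cap U$ satisfies $\exp_G(v)=e=\exp_G(0)$, which forces $v=0$.

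The delicate point is making sure that the monodromy element is genuinely recorded by $\exp_G$ of the same vector $v\in\mathfrak g$. To check this, I would use the explicit description of $\partial$ via variations of $A$-paths: the variation equation, pushed forward to $\mathfrak g$, becomes the standard variation formula in $G$. Uniformity then comes for free, because a single $U\subset\mathfrak g$ serves every $x$.

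With the criterion satisfied at every point, Crainic--Fernandes gives integrability. One could alternatively cite Dazord directly, but the argument above is the proof route I would take.
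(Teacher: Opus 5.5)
Your argument is correct, and it is the specialization of the Crainic--Fernandes criterion (Higher Lie III) that the paper alludes to when it says this criterion ``simplifies'' for action algebroids; the paper itself gives no proof and only cites Dazord. The ``delicate point'' in your Step 3 is immediate if you use the Crainic--Fernandes description $\mathcal N_x=\{v\in Z(\mathfrak g_x): v\sim 0_x\}$, with $v$ the constant $A$-path: its image under $\mathfrak g\ltimes M\to\mathfrak g$ integrates to $t\mapsto\exp_G(tv)$, and a $\mathfrak g$-homotopy to the zero path forces $\exp_G(v)=e$.
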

While Dazord's work establishes existence and uniqueness of the integrating groupoid, it is often difficult to describe it explicitly.
Our goal is therefore to construct a more concrete model of $\mathcal{A}_n$.

We consider the set of pairs 
\begin{equation*}
X_n
=
\left\{
(u,z)\ \middle|\ 
u:[0,1]\to \psu,\ u(0)=e,\ z\in \Dbarstar,\ 
u(t)\cdot z^n\neq 0
\right\}.
\end{equation*} 
Note that (we will prove it later) for each $(u(t),z) \in X_n$ there exists a unique smooth path $z(t)\in \Dbarstar$.
We can then define the family of Lie groupoids over the punctured disk
\begin{equation*}
\begin{array}{cc}
\mathcal{G}_n' & \coloneq X_n/\!\sim\\
\downdownarrows & \\
\Dbarstar &
\end{array}
\end{equation*}
quotienting by homotopies $u(t,s)$ that fix the endpoints and such that $u(t,s)\cdot z^n \neq 0$, $\forall\; t,s$.
The source and target maps are respectively
\begin{equation*}
\sigma'([u,z])=z,
\qquad
\tau'([u,z])=z(1),
\end{equation*}
and multiplication is induced by concatenation of paths in $\psu$. Our first main result establishes an identification of this Lie groupoid with the \v{S}evera--Weinstein groupoid:

\begin{maintheorem}
The Lie groupoid
\begin{equation*}
\mathcal{G}_n'\rightrightarrows \Dbarstar
\end{equation*}
is the source-simply  connected Lie groupoid integrating the action Lie algebroid
\begin{equation*}
\mathcal{A}_n=\mathfrak{sl}_2(\mathbb R)\ltimes \Dbarstar.
\end{equation*}
\end{maintheorem}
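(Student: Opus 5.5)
The plan is to identify $\mathcal{G}_n'$ with the Crainic--Fernandes description of the \v{S}evera--Weinstein groupoid $\mathcal{G}(\mathcal{A}_n)$ as $\mathcal{A}_n$-paths modulo $\mathcal{A}_n$-homotopies. Since Theorem \ref{actionlie} already gives integrability, the only task is to match the two constructions. After that I would check that the resulting object is a smooth, source-simply connected Lie groupoid whose algebroid is $\mathcal{A}_n$.

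The key observation is that $p_n\colon z\mapsto z^n$ restricts to an $n$-fold covering $\Dbarstar\to\Dbar\setminus\{0\}$. Moreover, the vector fields $E,F,H$ in \eqref{vf0} are $p_n$-related to the standard infinitesimal M\"obius action on $\Dbar$. I would verify this directly: push forward $E,F,H$ by $dp_n$ and compare with the $n=1$ fields. Consequently, $\mathcal{A}_n$ is the pullback along the covering $p_n$ of the action algebroid $\mathfrak{sl}_2(\mathbb R)\ltimes(\Dbar\setminus\{0\})$.

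\textbf{Step 1: $\mathcal{A}_n$-paths.} An $\mathcal{A}_n$-path is a pair consisting of a curve $\xi\colon[0,1]\to\mathfrak{sl}_2(\mathbb R)$ and a base curve $z(t)$ satisfying $\dot z(t)=\xi(t)_{\Dbarstar}(z(t))$. I will solve $\dot u\,u^{-1}=\xi$ with $u(0)=e$, which gives a bijection $\xi\leftrightarrow u$ with paths in $\psu$ starting at $e$.

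The base ODE then says exactly that $p_n(z(t))=u(t)\cdot z^n$. So a base curve through $z$ exists and is unique if and only if the curve $u(t)\cdot z^n$ avoids $0$. In that case $z(t)$ is the unique lift of $t\mapsto u(t)\cdot z^n$ through the covering $p_n$ starting at $z$. This proves the claim stated before the theorem that each $(u,z)\in X_n$ determines a unique smooth $z(t)$. It also shows that $\mathcal{A}_n$-paths starting at $z$ are in bijection with the elements of $X_n$ with second entry $z$. Under this bijection, endpoints correspond to $\sigma'$ and $\tau'$, and concatenation corresponds to concatenation.

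\textbf{Step 2: homotopies.} For action algebroids, an $\mathcal{A}$-homotopy between $\mathcal{A}$-paths corresponds, after integrating the $\mathfrak g$-valued part, to a fixed-endpoint homotopy $u(t,s)$ in the group. The base-map condition becomes that each $u(\cdot,s)$ admits a base curve. By Step 1, this is precisely the condition $u(t,s)\cdot z^n\neq0$ for all $t,s$.

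I would carry this out by writing the standard Crainic--Fernandes equations $\partial_t\beta-\partial_s a=[a,\beta]$ for the pair $(a,\beta)$. I would then observe that these are the compatibility equations for a map $u(t,s)$ with $\partial_t u\,u^{-1}=a$ and $\partial_s u\,u^{-1}=\beta$. Here $\beta(0,s)=\beta(1,s)=0$ is the statement that the endpoints of $u(\cdot,s)$ are fixed. Hence $\mathcal{G}(\mathcal{A}_n)=X_n/\!\sim\;=\mathcal{G}_n'$ as groupoids.

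\textbf{Step 3: smooth structure and algebroid.} Fix $z$ and consider the open set
\[
U_z=\{g\in\psu : g\cdot z^n\neq 0\}.
\]
Its complement $\{g : g\cdot z^n=0\}$ is a coset of a circle stabilizer, so it is a closed codimension-two submanifold of the three-manifold $\psu$. The source fiber $\sigma'^{-1}(z)$ is then the space of fixed-endpoint homotopy classes of paths in $U_z$ starting at $e$, that is, the universal cover $\widetilde{U_z}$. This fiber is therefore a simply connected manifold.

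These fibers assemble into the smooth manifold $\widetilde{W}$, where $W=\{(g,z) : g\cdot z^n\neq0\}\subset\psu\times\Dbarstar$ and $\widetilde{W}$ is the fiberwise universal cover. I would build $\widetilde{W}$ locally by lifting local sections, which gives charts in which the structure maps are visibly smooth. Differentiating at the identity bisection recovers $\mathfrak{sl}_2(\mathbb R)\times\Dbarstar$, with anchor given by the $t$-derivative of $z(t)$, namely $E,F,H$.

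\textbf{Expected obstacle.} The main difficulty is the careful matching in Step 2 together with the smoothness of the quotient in Step 3. In particular one must check that the $s$-dependent base paths vary smoothly, which again follows from covering-space lifting being smooth in parameters. One must also ensure the topology of $U_z$ behaves uniformly near the boundary circle $|z|=1$. I would handle the boundary by noting that $W$ is an open subset of a manifold with boundary on which $\sigma'$ is a submersion. Once the fibers are identified with $\widetilde{U_z}$, source-simple-connectivity is immediate.
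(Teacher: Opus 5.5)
Your proposal is correct and follows essentially the same route as the paper's proof via Proposition \ref{isoSW}: the bijection $u\leftrightarrow \dot u\,u^{-1}$ with base path obtained by lifting $u(t)\cdot z^n$ through $p_n$, and the matching of fixed-endpoint homotopies avoiding $u(t,s)\cdot z^n=0$ with $\mathcal{A}_n$-homotopies through the Maurer--Cartan compatibility equation. The only difference is your Step 3: the paper transports the smooth structure from $\mathcal{G}(\mathcal{A}_n)$ through the bijection and then invokes uniqueness, whereas you build the smooth structure directly as the fibrewise universal cover of $W$ with source fibres $\widetilde{U_z}$, which makes source-simple-connectivity explicit but is not logically needed once Step 2 is done.
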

Let $\mathcal{H}\in \mathfrak{sl}_2(\mathbb R)$ be the standard elliptic generator of $\mathfrak{sl}_2(\mathbb{R})$. We define
\begin{equation*}
\mathcal{G}_n:=X_n/\sim_n,
\end{equation*}
where the equivalence relation $\sim_n$ in addition to $\sim$ includes a composition of the path $u(t)$ with the path $v(t)=e^{\pi n\mathcal{H}t}$, that is the  loop winding $n$-times around the origin. 

\begin{maintheorem}
The quotient
\begin{equation*}
\mathcal{G}_n\rightrightarrows \Dbarstar
\end{equation*}
is a Lie groupoid integrating
\begin{equation*}
A_n=\mathfrak{sl}_2(\mathbb R)\ltimes \Dbarstar.
\end{equation*}
Moreover, $\mathcal{G}_n\rightrightarrows \Dbarstar$ is not an action groupoid over $\Dbarstar$, but its restriction to the boundary is:
\begin{equation*}
\mathcal{G}_n|_{\partial \Dbarstar}
\cong
\psun\ltimes S^1 .
\end{equation*}
\end{maintheorem}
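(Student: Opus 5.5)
We prove the three claims of the second main theorem in turn: $\mathcal{G}_n$ is a Lie groupoid integrating $\mathcal{A}_n$; it is not an action groupoid; and its restriction to the boundary is $\psun\ltimes S^1$. The basic input is the first main theorem, $\mathcal{G}_n'\cong\mathcal{G}(\mathcal{A}_n)$, which exists by Theorem \ref{actionlie}.

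\textbf{Step 1: $\mathcal{G}_n$ integrates $\mathcal{A}_n$.} We realise $\mathcal{G}_n$ as a quotient of $\mathcal{G}_n'$ by a discrete normal subgroupoid.
\begin{itemize}
\item For $z\in\Dbarstar$, let $c_z=[v,z]\in\mathcal{G}_n'$, where $v(t)=e^{\pi n\mathcal{H}t}$.
\item Since $e^{\pi\mathcal{H}}$ is the identity in $\psu$, the path $v$ is a loop that rotates $z^n$ by $e^{2\pi i n t}$. So $v(t)\cdot z^n\neq0$ for all $t$, and the lifted path $z(t)=e^{2\pi i t}z$ returns to $z$. Hence $c_z$ lies in the isotropy group at $z$.
\item The family $\{c_z\}$ is a smooth section of isotropy taking values in a discrete set of homotopy classes. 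It is central in each isotropy group, and it is invariant under conjugation: $g c_{\sigma'(g)} g^{-1}=c_{\tau'(g)}$.
\end{itemize}
Centrality and invariance follow from one observation. The lifted path of a concatenation $u * v$ is obtained by rotating, and rotation commutes with the rotation loop $v$ up to a homotopy avoiding $u(t)\cdot z^n=0$. Concretely, $v$ is homotopic to the loop obtained by conjugating it along $u$. So $N=\{c_z^k : k\in\Z\}$ is a discrete, closed, embedded, normal, wide subgroupoid of isotropy.

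Then $\mathcal{G}_n=\mathcal{G}_n'/N$ is a Lie groupoid: the quotient map is a local diffeomorphism on source fibres, so the quotient is étale over $\mathcal{G}_n'$. It therefore has the same Lie algebroid $\mathcal{A}_n$. We must check that the relation $\sim_n$ is exactly right-composition by elements of $N$; this is immediate from the definition.

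\textbf{Step 2: $\mathcal{G}_n$ is not an action groupoid.} Suppose $\mathcal{G}_n\cong G\ltimes\Dbarstar$ for some Lie group $G$ acting on $\Dbarstar$.
\begin{itemize}
\item Differentiating, the action algebroid of $G$ is isomorphic to $\mathcal{A}_n$ as a Lie algebroid. An isomorphism of action algebroids over the identity need not be constant in $\mathfrak g$. We handle this by noting that the infinitesimal action is then complete, whereas the generators $E,F$ of \eqref{vf0} have incomplete flows: near the origin $E\sim \frac1n z^{1-n}\partial_z$, whose trajectories reach $0$ in finite time.
\item To make this rigorous regardless of how the algebroid is presented, we use the source fibres. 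In an action groupoid, every source fibre is diffeomorphic to $G$, and for each $\xi$ the right-invariant vector field is complete and projects under $\tau$ to $\rho(\xi)$. Hence every section of the form $z\mapsto\xi$ (with $\xi$ constant in the trivialisation given by $G$) has complete anchor image.
\item The trivialisation coming from $G$ may differ from ours by a gauge map $\Dbarstar\to GL(\mathfrak{sl}_2)$. The cleanest invariant is therefore topological. In an action groupoid the isotropy groups are closed subgroups of the fixed group $G$, and the orbits are homogeneous spaces $G/G_z$.
\item Here $\tau$ has a single orbit, since the anchor is surjective away from $0$ and the space is connected. So every source fibre is $\tau$-mapped onto $\Dbarstar$. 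For an action groupoid, the source fibre at $z$ is identified with $G$, compatibly for all $z$, and $G\times\Dbarstar\cong\mathcal{G}_n$ is a trivial bundle over $\Dbarstar$ under $\sigma$.
\item We would show instead that the source fibres of $\mathcal{G}_n$ are non-diffeomorphic as $z$ varies: interior versus boundary fibres. This is expected because the constraint $u(t)\cdot z^n\neq0$ is vacuous for $|z|=1$, giving fibre $\widetilde{\psu}/n\Z\cong\psun$, while for $|z|<1$ paths must avoid a hypersurface. Computing $\pi_1$ of the interior source fibre distinguishes them; this rules out local triviality, and hence rules out the action groupoid structure.
\end{itemize}

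\textbf{Step 3: the boundary restriction.} For $|z|=1$ the condition $u(t)\cdot z^n\neq0$ is automatic, so $X_n|_{S^1}=P_e\psu\times S^1$. Modding out by homotopies gives $\widetilde{\psu}\times S^1$, and the further relation $\sim_n$ quotients by $n\Z\subset\pi_1(\psu)=\Z$, giving $\psun\times S^1$. The target map is the unique branch of $\bigl(g\cdot z^n\bigr)^{1/n}$ obtained by continuation along $u$, which is the $\psun$ action described in the introduction. Multiplication by concatenation matches group multiplication in $\psun$, so $\mathcal{G}_n|_{\partial\Dbarstar}\cong\psun\ltimes S^1$.

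\textbf{Main obstacle.} The hardest point is Step 2. I would first try the fibre-topology argument: compute the homotopy type of an interior source fibre of $\mathcal{G}_n'$ and compare it with the boundary fibre. If the fibres turn out to be diffeomorphic, I would fall back on incompleteness, showing that no choice of global frame of $\mathcal{A}_n$ closing to a Lie algebra yields complete vector fields, which uses the singular behaviour $z^{1-n}\partial_z$ near the puncture.
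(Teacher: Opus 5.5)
Steps 1 and 3 are fine and follow the paper. $\mathcal{G}_n$ is $\mathcal{G}_n'$ modulo the discrete central subgroupoid generated by the classes $[(c_n,z)]$ (Lemma~\ref{loops}). For $|z|=1$ the constraint $u(t)\cdot z^n\neq0$ is vacuous, so $\sigma'^{-1}(z)\cong\widetilde{\psu}$ and $\sigma^{-1}(z)\cong\widetilde{\psu}/n\Z=\psun$.

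The gap is in Step 2, where the argument you commit to cannot work. For $|z|<1$ the excluded set $\{g\in\psu:\ g\cdot z^n=0\}$ is a coset of the circle $G_{z^n}$, so it has codimension two, not one. Removing it from $\psu$ leaves $S^1\times\D_*$, and the source fibre of $\mathcal{G}_n'$ is its universal cover $\R\times\tilde{\D}_*$. Dividing by the $\Z$ generated by $c_n$ gives $\sigma^{-1}(z)\cong S^1\times\tilde{\D}_*$. This has $\pi_1\cong\Z$, exactly like $\psun$; in fact both fibres are diffeomorphic to $S^1\times\R^2$. The paper makes this point in the remark closing Section~\ref{section4}: the fibre-topology argument works for $(\psu\times\Dbar)|_{\Dbarstar}$ (fibres $S^1\times\D_*$ versus $\psu$, i.e.\ $\Z^2$ versus $\Z$), but not for $\mathcal{G}_n$. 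Your auxiliary claim that there is a single orbit is also wrong: the anchor is tangent to $|z|=1$, so $\D_*$ and $\partial\Dbarstar$ are separate orbits.

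Your fallback is to show that no frame of $\mathcal{A}_n$ closing into a Lie algebra has complete anchor image. That restates the strong statement rather than arguing for it, and it is stronger than what the paper's argument establishes. Easy invariants will not decide it. The source fibres ($S^1\times\R^2$) and isotropy groups ($\R$ at interior points, $AN$ at boundary points) of $\mathcal{G}_n$ coincide with those of a genuine action groupoid: $\psu$ acting on two-dimensional de Sitter space with one ideal circle adjoined, which is again homeomorphic to $\Dbarstar$.

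The paper's proof uses the weaker reading in which the given action $\rho_n$ integrates. It assumes $\mathcal{G}_n\cong H\ltimes\Dbarstar$ with a covering $q:H\to\psu$ satisfying $(h\cdot z)^n=q(h)\cdot z^n$. It then picks $g$ with $g\cdot z^n=0$ and $g_k\to g$ with $g_k\cdot z^n\neq0$, lifts to $\tilde g_k\to\tilde g$, and obtains both $\tilde g_k\cdot z\to0$ and $\tilde g_k\cdot z\to\tilde g\cdot z\in\Dbarstar$, a contradiction. For that reading, the observation in your first bullet already suffices and is more direct. Under $E$ the point $w=z^n$ obeys $\dot w=1-w^2$, so a point with $z^n\in(-1,0)$ reaches the puncture in finite time, whereas the generators of a global $H$-action are complete. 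You set aside the argument that works and replaced it with one that fails.
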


The paper is organized as follows. In Section \ref{section2} we introduce hyperbolic metrics with (conical) singularities, and derive the infinitesimal isometries of such metrics. The resulting singular \(\mathfrak{sl}_2(\mathbb R)\) vector fields on the disk enable us to define the action Lie algebroid \(\mathcal{A}_n\), setting the ground for integration questions. In Section \ref{section3} we present a short review on Lie groupoids, Lie algebroids, and how the standard integration is carried through. In Section \ref{section4} we construct a more explicit integrating groupoid, \(\mathcal{G}_n\), prove its relation with the \v{S}evera--Weinstein groupoid, analyze its source fibers, and show that it is not an action groupoid on the punctured disk. Finally, in  Section \ref{section5} we explore an alternative viewpoint using correspondences. We embed the ramified Riemann surface defined into the product $\Dbar \times \Dbar$ as a correspondence. One can then establish a forgetful map from the groupoid $\mathcal{G}_n$ to the correspondence that preserves the product structure.

\paragraph{Acknowledgments} 
I would like to thank my supervisor Anton Alekseev for his valuable ideas and support throughout this project. I am grateful to Pierre de la Harpe, Eckhard Meinrenken, Muze Ren, Pavol \v{S}evera, Samson Shatashvili and Donald Youmans for the inspiring discussions and useful comments. I would moreover like to thank Nikolai Perry and Davide Saccardo for proofreading this manuscript and for their insightful inputs.
This research was supported by the grant 10005178 of the Swiss National Science Foundation (SNSF).
\section{Setup}\label{section2}
Consider the Poincaré disk model with boundary
\begin{equation*}
\Dbar=\{z\in \mathbb{C}\mid 0 \leq|z|\leq1\},
\end{equation*}
 equipped with the standard hyperbolic metric:
\begin{equation}\label{eq:PoincareDiskMetric}
ds^2=\frac{4\,|dz|^2}{(1-|z|^2)^2}, \quad z\in \Dbar.
\end{equation}
The boundary component is regarded as a boundary at infinity, and it is called an \textit{ideal boundary}. The isometries of such a metric are given by  $\psu$ acting by Möbius transformations 
\begin{equation}\label{mobius}
    g \cdot z \coloneqq \frac{a z + b}{\bar{b} z + \bar{a}}
\end{equation}
where $g = \begin{pmatrix} a & b \\ \bar{b} & \bar{a} \end{pmatrix}$  $\in \psu$, i.e. $\abs{a}^2-\abs{b}^2=1$, and $z \in \Dbar$.

Let us define the punctured Poincaré disk with boundary. 
Topologically, it is homeomorphic to the closed unit disk with a point $p$ removed
\begin{equation*}
    \Dbarstar \coloneqq \Dbar \setminus \{ p\}.
\end{equation*}
Geometrically, the puncture is encoded in a singularity of the metric\footnote{Some authors, for instance McOwen \cite{McOwen}, make a distinction between singularities and degeneracies: singularities are obtained by pushforward of the standard metric, while degeneracies result from pullback. Here, however, we will not maintain this distinction and will use the same term for both.}, in the following sense:

\begin{definition}
    A (conformal) metric $g$ on a surface $\Sigma$ admits a \emph{singularity of order $\beta-1, \beta \in \R $}, at $p \in \Sigma$ if it can be locally written as
\begin{equation*}
g = e^{2u} |z|^{2(\beta-1)} |dz|^2,
\end{equation*}
with 
\begin{equation*}
    u \in L^1, \text{ and }\text{ }  \Delta u \in L^1,
\end{equation*}
where $z$ is a local (conformal) coordinate such that
$z(p)=0$. 
\end{definition}
In particular, if $\beta \geq 1$, the angle $2\pi(1-\beta)\leq 0$ and this describes a cone. Then  $p$ is called a \textit{conical singularity} of $g$. This will be the case of interest throughout the paper. The conical singularities arise from local covering maps \cite{McOwen}:
\begin{equation}\label{nfold}
      p_\beta : z \longmapsto w = z^{\beta}.
\end{equation}
Fixing $\beta=n$, $n\in \mathbb{Z}_{>0}$,  
the pullback of the standard metric  \eqref{eq:PoincareDiskMetric} reads
\begin{equation*}
   g_{\Dbarstar}
   = p_{n}^{*} g_{\Dbar}
   = \frac{4n^{2}\,|z|^{2(n-1)}}{\bigl(1-|z|^{2n}\bigr)^{2}}\,|dz|^{2}.
\end{equation*}
Observe that this metric has a singularity of order $n-1$ at $z=0$. In this chosen local coordinate, the model space is then $\Dbarstar=\{z\in \mathbb{C}\mid 0 <|z|\leq1\}$.

The goal of this work is to study the isometries of such a singular hyperbolic metric. Since we are dealing with a ramified cover, we do not expect the $\psu$ action on the disk $\Dbar$ to lift to a $\psun$ action on the $n$-fold cover of $\Dbar$ by $p_n$, as mentioned in the introduction.
Before describing this failure explicitly, let us first explain how to define a $\psun$ action on the boundary circle. Consider the diagram:
 \begin{equation*}
\begin{tikzcd}
S^1 \arrow[r, "g^n"] \arrow[d, "p_n"'] & S^1 \arrow[d, "p_n"] \\
S^1 \arrow[r, "g"'] & S^1
\end{tikzcd}
\end{equation*}
where ${g}^n$ is the lift of $g$ such that $p_n \circ g^n=g \circ p_n$. The lifting can be realized explicitly in the following way. $\psu$ acts on the unit disk as in \eqref{mobius}, and in particular on boundary points $z=e^{i \theta}\in S^1$
\begin{equation*}
g\cdot e^{i\theta}
=
\frac{ae^{i\theta}+b}{\overline b e^{i\theta}+\overline a}.
\end{equation*}
This action preserves $S^1$ and is orientation-preserving.
Furthermore, it induces a circle diffeomorphism:
\begin{align*}
\begin{split}
    f_g:S^1 &\to S^1 \\
    e^{i \theta}& \mapsto \frac{ae^{i\theta}+b}{\overline b e^{i\theta}+\overline a} 
    \end{split}
\end{align*}
 which admits a lift to the real line
\begin{equation*}
F_g:\mathbb R\to \mathbb R
\end{equation*}
such that

\begin{equation}
\label{nlift}
e^{iF_g(\theta)}=f_g(e^{i\theta}).
\end{equation}
This lift $F_g$ is the unique up to integer multiples of $2\pi$:
\begin{equation*}
F_g\sim F_g+2\pi k,
\qquad k\in\mathbb Z.
\end{equation*}
Then an element $g^n\in$ $\psun$ over $g\in\psu$ is represented by a pair $g^n=(g,F_g)$
acting on the circle
\begin{equation*}
{
g^n\cdot e^{i\phi}
=
\exp\left(\frac{i}{n}F_g(n\phi)\right).
}
\end{equation*}
Using this and \eqref{nlift} we can then write  \cite{Dai2000}
\begin{equation}
\label{nformula}
    g^n\cdot z
=
\left(
\frac{a z^n+b}{\overline b z^n+\overline a}
\right)^{1/n}, \quad |z|=1 
\end{equation}
where the choice of the $n$-th root is determined by the choice of the lift $F_g$. 
By continuity, once such a lift is fixed, the action is globally well-defined and compatible with the cyclic order on the circle.
This formula fails to extend to the interior of the disk. In contrast to the circle action, the $n$-th root cannot be defined as a single-valued function on $\Dbar$ due to the lack of the cyclic ordering on the disk. Nevertheless, the formula $\eqref{nformula}$ is locally well-defined.
Given the branched covering map $p_n: z \mapsto z^n$, since $p_n'(z) = nz^{n-1} \neq 0$ in $\Dbarstar$, the map $p_n$ is a local diffeomorphism. Thus every point in $\Dbarstar$ has a
sufficiently small  neighborhood $U \subset \Dbarstar$ on which a smooth branch of $p_n^{-1}$ can be chosen.
 This enables us to formulate the next proposition.
\begin{proposition}\label{vfn}
    The following vector fields in $\Dbarstar$
 \begin{align*}
    E &= \frac{1}{n}((z^{1-n} - z^{1+n})\partial_{z} + (\overline{z}^{1-n} - \overline{z}^{1+n})\partial_{\overline{z}}) \\
    F &= \frac{i}{n}((z^{1-n} + z^{1+n})\partial_{z} -(\overline{z}^{1-n} + \overline{z}^{1+n})\partial_{\overline{z}}) \\
    H &= \frac{2i}{n}(z\partial_{z} - \overline{z}\partial_{\overline{z}})
\end{align*}
  span a Lie algebra isomorphic to $\mathfrak{sl}_2(\mathbb{R})$ and provide the infinitesimal action for the formula \eqref{nformula}. 
\end{proposition}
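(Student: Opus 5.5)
The proof has two parts: first identify $E,F,H$ as pullbacks of the standard Möbius generators under the local diffeomorphism $p_n$, and then read off both the bracket relations and the interpretation as the infinitesimal action of \eqref{nformula}.

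\textbf{Step 1: the case $n=1$.} Write $g=\exp(t\xi)$ for $\xi\in\slR$, realized as $\mathfrak{su}(1,1)$. Differentiating $g\cdot w=(aw+b)/(\bar b w+\bar a)$ at $t=0$ gives three real holomorphic vector fields on $\Dbar$ (each written as $V+\bar V$):
\begin{equation*}
E_1=(1-w^2)\partial_w+(1-\bar w^2)\partial_{\bar w},\qquad
F_1=i(1+w^2)\partial_w-i(1+\bar w^2)\partial_{\bar w},\qquad
H_1=2i(w\partial_w-\bar w\partial_{\bar w}).
\end{equation*}
These come from $\exp(t\xi)$ with $\xi$ the corresponding hyperbolic and elliptic generators, up to normalization. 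A direct computation with $[f\partial_w,h\partial_w]=(fh'-hf')\partial_w$ gives a closed bracket table, for instance $[H_1,E_1]=2F_1$ up to sign conventions. This table is that of $\slR$, because the map $\xi\mapsto \xi_{\Dbar}$ is an (anti-)homomorphism and is injective.

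\textbf{Step 2: pull back by $p_n$.} On any open $U\subset\Dbarstar$ on which $p_n$ is a diffeomorphism onto its image (these exist, as noted just before the proposition), set $w=z^n$. Then $\partial_w=\frac{1}{nz^{n-1}}\partial_z$, and similarly for the conjugate. This gives
\begin{equation*}
(1-w^2)\partial_w=\tfrac1n(z^{1-n}-z^{1+n})\partial_z,\qquad
i(1+w^2)\partial_w=\tfrac in(z^{1-n}+z^{1+n})\partial_z,\qquad
2iw\partial_w=\tfrac{2i}{n}z\partial_z,
\end{equation*}
so that $E=p_n^*E_1$, $F=p_n^*F_1$ and $H=p_n^*H_1$ on $U$. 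The resulting formulas are globally defined on $\Dbarstar$ and do not depend on the chosen branch, since they involve only integer powers of $z$. Pullback by a local diffeomorphism preserves Lie brackets. Since bracket identities are local, $E,F,H$ therefore satisfy the same relations as $E_1,F_1,H_1$ on all of $\Dbarstar$.

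For linear independence, note that $E,F,H$ are independent over $\R$ as vector fields. For example, $H$ is the only one of the three with no $z^{1\pm n}$ terms, and the coefficients of $E$ and $F$ differ by a factor of $i$. Hence their span is $3$-dimensional and isomorphic to $\slR$.

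\textbf{Step 3: infinitesimal action.} Near a point $z_0$ with $|z_0|=1$, and more generally on any $U$ as above, formula \eqref{nformula} with a fixed local branch of the root reads $g^n\cdot z=p_n^{-1}(g\cdot p_n(z))$. Take $g=\exp(t\xi)$ and let $p_n^{-1}$ be the branch through $z$ at $t=0$; this is the branch singled out by the lift $F_g$ with $F_e=\mathrm{id}$. Differentiating at $t=0$ gives $(p_n^{-1})_*\xi_{\Dbar}=p_n^*\xi_{\Dbar}$, which is exactly the corresponding vector field among $E,F,H$.

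The main obstacle is consistency of conventions, so that the stated formulas match exactly rather than up to sign or scalar. In particular, one must choose the basis of $\mathfrak{su}(1,1)$ and the branch of the $n$-th root compatibly. I would fix $E\leftrightarrow\begin{pmatrix}0&1\\1&0\end{pmatrix}$, $F\leftrightarrow\begin{pmatrix}0&i\\-i&0\end{pmatrix}$ and $H\leftrightarrow\begin{pmatrix}i&0\\0&-i\end{pmatrix}$, and check these three derivatives explicitly against the list in Step 1.
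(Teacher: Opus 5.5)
Your proposal is correct and is essentially the paper's argument: the paper's chain-rule differentiation of $R_t(z)^{1/n}$ at $t=0$ is exactly your pullback of the $n=1$ Möbius fields along the local diffeomorphism $p_n$. Your naturality argument (pullback along $p_n$ preserves brackets, and the infinitesimal $\mathrm{PSU}(1,1)$-action is an injective anti-homomorphism) gives a cleaner justification of the bracket relations, which the paper simply states after a direct computation. One small correction: with the fields as you wrote them, $[H_1,E_1]=-2F_1$ (consistent with the paper's $[H,E]=-2F$), not $2F_1$; this does not affect your conclusion, since you obtain the isomorphism type from injectivity rather than from the table.
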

\begin{proof}
(Explicit calculation.)
Consider a smooth curve in the group:
\begin{align*}
    \Gamma_{(a,b)}&:[0,1]\xrightarrow{} \psu\\
    t& \mapsto \begin{pmatrix}
        a_t & b_t \\
        \bar{b}_t & \bar{a}_t
    \end{pmatrix},
\end{align*}
such that $\Gamma_{(a,b)}(0)=e \in \psu$, i.e. $a_0=1$ and $b_0=0$. Expansion near identity gives:
\begin{equation}
a_t=1+\dot{a_0} t+O(t^2),
\qquad
b_t=\dot{b_0} t+O(t^2).
\end{equation}
 Then
\begin{equation*}
\bar a_t=1+\bar{\dot{a_0}} t+O(t^2),
\qquad
\bar b_t=\bar{\dot{b_0}} t+O(t^2),
\end{equation*}
and the group condition $|a|^2-|b|^2=1$ yields $\dot{a_0}=i\alpha$ and $\dot{b_0}=\beta$, with $ \alpha \in \mathbb{R}$ and $\beta \in \mathbb{C}$.

Now let us go back to the formula \eqref{nformula}. Define
\begin{equation}
\gamma_{(a_t,b_t)}(z)
\coloneqq
\left(\frac{a_t z^n+b_t}{\bar b_t\, z^n+\bar a_t}\right)^{1/n}.
\end{equation}
As we explained earlier, while globally this formula remains ill-defined, it locally exists (for $\frac{a_tz^n+b_t}{\bar{b_t}z^n+\bar{a_t}} \neq0$) and it defines $n$ different functions in the neighbourhood of $\frac{a_tz^n+b_t}{\bar{b_t}z^n+\bar{a_t}}$. We pick the branch corresponding to the $n$-th root close to identity, so that $\gamma_{(1,0)}(z)=z$. We can then differentiate to obtain vector fields, as follows.
Set
\begin{equation}\label{Rt}
R_t(z):=\frac{a_t z^n+b_t}{\bar b_t\, z^n+\bar a_t}.
\end{equation}
Since $R_0(z)=z^n$,  differentiating (\ref{Rt}) and evaluating at $t=0$ gives
\begin{equation}
    \begin{aligned}
\frac{d}{dt}\Big|_{t=0} R_t(z)
&=(\dot a_0 z^n+\dot b_0)-z^n(\dot{\bar b}_0 z^n+\dot{\bar a}_0) \\
&= i\alpha z^n+\beta-z^n(\bar\beta z^n-i\alpha) \\
&= \beta+2i\alpha z^n-\bar\beta z^{2n}.
\end{aligned}
\end{equation}
Now use the chain rule for the $n$-th root to obtain:
\begin{equation*}
    \begin{aligned}
   \frac{d}{dt}\Big|_{t=0} \gamma_{a_tb_t}(z)
&=\frac1n z^{1-n}\bigl(\beta+2i\alpha z^n-\bar\beta z^{2n}\bigr) \\
&=\frac1n\bigl(\beta z^{1-n}+2i\alpha z-\bar\beta z^{n+1}\bigr).
\end{aligned}
\end{equation*}
Hence the corresponding real infinitesimal vector field is
\begin{equation*}
V_{\alpha,\beta}
=
\frac1n\bigl(\beta z^{1-n}+2i\alpha z-\bar\beta z^{n+1}\bigr)\partial_z
+
\frac1n\bigl(\bar\beta \bar z^{1-n}-2i\alpha \bar z-\beta \bar z^{n+1}\bigr)\partial_{\bar z}.
\end{equation*}
Choosing $(\alpha,\beta)=(0,1),(0,i),(1,0)$, one obtains
\begin{equation}
\begin{aligned}\label{vf}
E
&=
\frac1n\Bigl((z^{1-n}-z^{n+1})\partial_z+(\bar z^{1-n}-\bar z^{n+1})\partial_{\bar z}\Bigr), \\
F
&=
\frac1n\Bigl(i(z^{1-n}+z^{n+1})\partial_z-i(\bar z^{1-n}+\bar z^{n+1})\partial_{\bar z}\Bigr), \\
H
&=
\frac{2i}{n}\Bigl(z\partial_z-\bar z\partial_{\bar z}\Bigr).
\end{aligned}
\end{equation}
The Lie bracket reads
\begin{equation}
[E,F]=2H,
\qquad
[H,E]=-2F,
\qquad
[H,F]=2E.
\end{equation}
Thus these vector fields form a copy of $\mathfrak{su}(1,1)\cong \mathfrak{sl}_2(\mathbb R)$.
Observe that for $n>1$, the term $z^{1-n}$ is singular at $z=0$, so these are naturally vector fields on the punctured disk. 
\end{proof}
\begin{remark}
    Note that using polar coordinates $z=\rho e^{i\phi}$ and restricting to the boundary $\rho=1$, one recovers the familiar $n$-fold Möbius infinitesimal action on the circle \cite{Valach_2020}:
\begin{equation}
\begin{aligned}
    E&=-\frac{2}{n}\sin(n\phi)\partial_\phi \\
    F&=\frac{2}{n}\cos(n\phi) \partial_\phi\\
    H&=\frac{2}{n}\,\partial_\phi.
\end{aligned}
\end{equation}
\end{remark}
Because of their singular behaviour at $z=0$, these vector fields do not integrate into a Lie group action.
We can instead work in a Lie algebroid framework.
For all $n \in \Z_{>0}$, we can define Lie algebroids:
\begin{equation} \label{actionLiealg}
    \mathcal{A}_{n} := \slR \ltimes \Dbarstar
\end{equation}
with anchor map 
\begin{equation*}
\rho_{n}: \mathcal{A}_{n} \to T\Dbarstar,
\end{equation*}
given by the vector fields in \eqref{vf}.
Note that $\rho_1$ corresponds to Möbius transformations and it extends to the whole disk.

Our goal (and the structure of this manuscript) can then be resumed with the following questions:
\begin{question}\label{question1}
    Is the Lie algebroid $\mathcal{A}_n$ integrable?
\end{question}
\begin{question}\label{question2}
    If so, what is the integrating Lie groupoid?
\end{question}

\begin{question}
    How to recover $\psun$ action?
\end{question}
   
\section{Preliminaries on Lie groupoids and algebroids}\label{section3}
We will review in this paragraph some background on Lie algebroids and groupoids, illustrated by some warm-up examples. Most of it is based on the lecture notes \cite{meinrenkengroupoids}, and on the resume of \cite{bursztyn2023liegroupoids}.

\subsection{Lie groupoids}
 Lie groupoids are the many-object generalizations of a Lie group. They are described by a manifold $M$ of objects, together with a collection of arrows (possibly empty) assigned to any two objects $m_1, m_2 \in M$. We will require that these collections of arrows fit smoothly together into a manifold of arrows, with composition defined provided that the end point of one arrow is the starting point of the next. We will draw the arrows from right to left as this choice is more convenient when picturing composition:

\begin{equation*}
\begin{tikzcd}
\tau(g) & \sigma(g) \arrow[l, bend right=35, "g" above]
\end{tikzcd}
\end{equation*}

\begin{definition}
A \textit{Lie groupoid} $\mathcal{G}\rightrightarrows M$ \footnote{The two arrows in this notation represent the source and the target maps.} consists of a manifold of arrows $\mathcal{G}$ and a submanifold $\iota: M \hookrightarrow \mathcal{G}$ of units (also called objects) equipped with two surjective submersions $\sigma,\tau$ called source and target maps such that 
\begin{equation}
    \sigma \circ \iota = \tau \circ \iota = \text{Id}_M,
\end{equation}
and a multiplication map $m:(g_1,g_2)\mapsto g_1g_2$ defined whenever $g$ and $h$ are \textit{composable}, i.e. $\sigma(g_1)=\tau(g_2)$. 
The composition rule can be visualized as follows:
\begin{center}
\begin{minipage}{.3\textwidth}
\begin{tikzcd}
m_3 & m_2 \arrow[l, bend right=35, "g_1" above] & m_1 \arrow[l, bend right=35, "g_2" above]
\end{tikzcd}
\end{minipage}\hspace{0.5cm}
\begin{minipage}{.5\textwidth}
\begin{tikzcd}
m_3 & m_1 \arrow[l, bend right=35, "g_1 \circ g_2" above]
\end{tikzcd}
\end{minipage}
\end{center} 

This data must satisfy the axioms of associativity, existence of a unit (trivial arrow) and an inverse element (reverse arrow).
\end{definition}

\begin{example}{(Lie groups).} A Lie group $G$ is a groupoid with a single object $\mathcal{G}\rightrightarrows \text{pt}$.
\end{example}

 \begin{example}
     (Manifolds). On the other hand, every manifold $M$ can be seen as the trivial Lie groupoid $M \rightrightarrows M$, where all elements are also units. The multiplication law is then: $m=m_1 \circ m_2 \iff m=m_1=m_2$.
 \end{example}

 \begin{example}{(Pair groupoid).} To any manifold $M$ one can associate the pair groupoid $\text{Pair}(M)=(M \times M)\rightrightarrows M$, with a unique arrow between any two points $m, m'$. Source and target are $\sigma((m,m'))=m$ and $\tau((m,m'))=m'$. The multiplication law is:
 \begin{equation*}
    (m, m')=(m_1, m_1')\circ (m_2, m_2')\iff m=m_1,\quad m'=m_2', \quad m_1'=m_2. 
 \end{equation*} 
     
 \end{example}

\subsection{Lie algebroids}
The infinitesimal data of a Lie groupoid is captured by Lie algebroids.
\begin{definition}
    A \textit{Lie algebroid} is a vector bundle $\pi:\mathcal{A} \rightarrow M$, together with a Lie bracket $[\cdot,\cdot]$ on its space of sections, such that there exists a vector bundle map
    \begin{equation*}
        \rho: \mathcal{A} \rightarrow TM
    \end{equation*}
    called the \textit{anchor map}, satisfying the Leibniz rule
    \begin{equation*}
        [\alpha, f\beta]=f[\alpha,\beta]+ (\rho(\alpha)f)\beta,
    \end{equation*}
    $\forall \alpha, \beta \in \Gamma(\mathcal{A})$ and $f \in C^\infty(M).$
 \end{definition}
\begin{example}{(Lie algebra).}
    A Lie algebroid over a point $\mathcal{A}\rightarrow \text{pt}$ is the same as a Lie algebra.
\end{example}
\begin{example}{(Action Lie algebroid).}
   A Lie algebra $\mathfrak{g}$ action on $M$ defines a Lie algebroid $\mathcal{A}=\mathfrak{g}\ltimes M$. The anchor is given by the action map
   \begin{equation*}
   \begin{split}
        M \times \mathfrak{g} &\xrightarrow[]{} TM \\
        (m, X)&\mapsto X_M(m).
   \end{split}
    \end{equation*}
The Lie bracket is the unique extension of the given Lie bracket on constant sections determined by the Leibniz rule:
\begin{equation*}
    [X,Y]=[X,Y]_\mathfrak{g}+\mathcal{L}_{\rho(X)}Y - \mathcal{L}_{\rho(Y)}X,
\end{equation*}
for all $X,Y: M \xrightarrow[]{}\mathfrak{g}$  sections of $\mathcal{A}$, where $\mathcal{L}_{\rho(X)}$ denotes the Lie derivative along the vector field defined by the anchor. The Lie algebroid we defined in \eqref{actionLiealg} is an example of an action Lie algebroid.
\end{example}
\subsection{Integrability theory}

Let $\mathcal{G} \rightrightarrows M$ be a Lie groupoid, with source and target maps denoted $\sigma,\tau \colon \mathcal{G} \to M$.
A vector field $X \in \mathfrak{X}(\mathcal{G})$ is said to be \emph{left-invariant} if it is tangent to the $\tau$-fibers, and
\begin{equation*}
X_g = (L_g)_* X_{\sigma(g)}
\end{equation*}
for all $g \in G$, where
\begin{equation*}
L_g \colon \tau^{-1}(\sigma(g)) \to \tau^{-1}(\tau(g))
\end{equation*}
is the left translation. 
Given these data, we can then construct the Lie algebroid
\begin{equation*}
\mathcal{A} = \text{Lie}(\mathcal{G}) \rightarrow M
\end{equation*}
of a Lie groupoid. It is defined by the following data:
\begin{itemize}
    \item A vector bundle, $\mathcal{A} = \ker(T\tau)\big|_M$.
    \item The bracket on $\Gamma(\mathcal{A})$ is identified with the left-invariant vector fields.
    \item The anchor is the restriction of $T\sigma \colon TG \to TM$ to $\mathcal{A} \subseteq TG|_M.$
\end{itemize}
A Lie algebroid $\mathcal{A} \rightrightarrows M$ is called \emph{integrable} if it is of the form $\mathcal{A} = \text{Lie}(\mathcal{G})$ for a Lie groupoid $\mathcal{G} \rightrightarrows M$.
Similar to the classical Lie theory reviewed in the introduction, the integrability of Lie algebroids is ruled by analogues of Lie's theorems.
Lie's first and second theorems admit a promising generalization to groupoids and algebroids, namely \cite{crainic2004integrabilityliebrackets}:

\begin{theorem}[Higher Lie I]
    If $\mathcal{A}$ is an integrable Lie algebroid, then there exists a unique (up to isomorphism) source-simply connected Lie groupoid integrating it.
\end{theorem}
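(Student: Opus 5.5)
Given an integrable Lie algebroid $\mathcal{A}\to M$, I will build a source-simply connected integration out of an arbitrary integration $\mathcal{G}\rightrightarrows M$ with $\text{Lie}(\mathcal{G})\cong\mathcal{A}$, and then prove uniqueness by integrating algebroid morphisms. To get a candidate, I first pass to the source-connected part $\mathcal{G}^0\subseteq\mathcal{G}$, the union of the connected components of the identity in each source fiber $\sigma^{-1}(x)$. The standard argument shows $\mathcal{G}^0$ is an open subgroupoid: it is open because $\sigma$ is a submersion, so fibers vary locally trivially. Its algebroid is still $\mathcal{A}$. Next I form the fiberwise universal covers. Let $\widetilde{\mathcal{G}}$ be the set of $\sigma$-fiber homotopy classes, relative to endpoints, of paths $\gamma:[0,1]\to\sigma^{-1}(x)$ with $\gamma(0)=1_x$. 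Source and target are $\sigma(\gamma(0))$ and $\tau(\gamma(1))$. Multiplication is $[\gamma_1]\cdot[\gamma_2]=[(\gamma_1\cdot\gamma_2(1))\ast\gamma_2]$, that is, I right-translate $\gamma_1$ by the endpoint of $\gamma_2$ and then concatenate. Right translation maps source fibers to source fibers, so this product is well defined on homotopy classes.

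The main obstacle is showing that $\widetilde{\mathcal{G}}$ is a (Hausdorff, at least smooth) manifold and that $\sigma,\tau$ are submersions. I will handle this by viewing $\widetilde{\mathcal{G}}$ as the quotient of the monodromy groupoid of the foliation of $\mathcal{G}^0$ by $\sigma$-fibers, restricted to paths starting on the unit section. Concretely, I will construct charts around a class $[\gamma]$ by choosing a tubular family of paths: pick a local section $s$ of $\sigma$ through $\gamma(1)$, and use the local triviality of the fibration $\sigma$ along $\gamma$ to transport $\gamma$ to nearby fibers. Nearby classes are then parametrized by (nearby fiber point, local coordinate near the endpoint), which gives charts modeled on an open subset of $\mathcal{G}^0$. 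On this chart the covering map $\widetilde{\mathcal{G}}\to\mathcal{G}^0$, $[\gamma]\mapsto\gamma(1)$, is a local diffeomorphism. Because of this, the algebroid of $\widetilde{\mathcal{G}}$ coincides with that of $\mathcal{G}^0$, namely $\mathcal{A}$. The source fibers are the universal covers of $\sigma^{-1}(x)\cap\mathcal{G}^0$, so they are simply connected.

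For uniqueness, suppose $\mathcal{H}$ is another source-simply connected integration. I will invoke the groupoid analogue of Lie II: a Lie algebroid morphism $\text{Lie}(\mathcal{H})\to\text{Lie}(\mathcal{K})$ with $\mathcal{H}$ source-simply connected integrates to a groupoid morphism $\mathcal{H}\to\mathcal{K}$. I prove this by integrating $\mathcal{A}$-paths. Each $h\in\mathcal{H}$ is joined to $1_{\sigma(h)}$ by a path in its source fiber. Right-translating its derivative gives an $\mathcal{A}$-path, which I push forward and solve as a time-dependent right-invariant ODE on $\mathcal{K}$. Simple connectivity of the source fibers makes the endpoint independent of the chosen path. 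Applying this in both directions to the identity of $\mathcal{A}$ gives mutually inverse morphisms $\widetilde{\mathcal{G}}\leftrightarrows\mathcal{H}$; their composites integrate the identity, so by the same uniqueness they are identities.
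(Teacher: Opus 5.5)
The paper does not prove this statement. It quotes it from \cite{crainic2004integrabilityliebrackets} and uses it only through the \v{S}evera--Weinstein model of Theorem~\ref{severa-weinstein}. Your outline is the classical argument: source-connected part, then the fiberwise universal cover realized as the part of the monodromy groupoid of the $\sigma$-foliation of $\mathcal{G}^0$ with source on the unit section (a restriction, not a quotient), then uniqueness via Lie II. Its architecture is right, but two points need repair.

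First, do not aim to show that $\widetilde{\mathcal{G}}$ is Hausdorff. This is false in general, and the existence half of the statement holds only if non-Hausdorff arrow spaces are allowed (base and source fibers Hausdorff), which is the convention of the cited reference. Take $M=\mathbb{R}^3\setminus\{x^2+y^2=1,\ z=0\}$ foliated by horizontal slices, and $\mathcal{A}=T\mathcal{F}$. The groupoid of pairs of points in a common leaf is open in $\{(p,q)\in M\times M:\ z(p)=z(q)\}$, hence a Hausdorff integration. In its fiberwise universal cover, a loop $\ell$ at $p_0=(2,0,0)$ around the removed circle gives $[\ell]\neq 1_{p_0}$. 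Its vertical translates $\ell_k$ into the planes $z=1/k$ are contractible. So the sequence $1_{p_k}=[\ell_k]$ converges both to $[\ell]$ (your charts around $[\ell]$ contain the transports $[\ell_k]$) and to $1_{p_0}$. By uniqueness, no source-simply connected integration of this $\mathcal{A}$ is Hausdorff. In your construction this appears as follows: the set where two charts define the same classes is open (transport the homotopy to nearby fibers) but need not be closed. Relatedly, $\sigma$ is not a locally trivial fibration. What your charts actually use is the local normal form of a submersion, chained along a compact path. Applying the same chaining to a homotopy square is what makes the charts well defined.

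Second, in the Lie II step the sentence ``simple connectivity of the source fibers makes the endpoint independent of the chosen path'' carries the whole proof. As written it never uses that the algebroid map preserves brackets, yet it must: for a bundle map that is not a morphism the conclusion is false, already for Lie groups. The missing lemma has three parts.
\begin{enumerate}
\item A homotopy $\gamma_s$ in $\sigma^{-1}(x)\subset\mathcal{H}$, after right-translating $\partial_t\gamma_s$ and $\partial_s\gamma_s$, gives a Lie algebroid morphism $a\,dt+b\,ds:T(I\times I)\to\mathcal{A}$ satisfying the boundary conditions \eqref{bc}.
\item Composing with $\phi$ keeps it a morphism.
\item In $\mathcal{K}$, let $k_s$ solve $\dot k_s k_s^{-1}=\phi(a_s)$ with $k_s(0)=1_x$, and let $\beta$ be the right-translated $s$-derivative of $k_s$. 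Then $\beta$ satisfies the same linear equation in $t$ as $\phi(b)$, namely $\partial_t\beta-\partial_s\phi(a)=T_\nabla(\phi(a),\beta)$, with the same initial value $0$. Hence $\beta=\phi(b)$, which vanishes at $t=1$.
\end{enumerate}
This correspondence between $\mathcal{A}$-homotopies and fiberwise homotopies is exactly the technical core Crainic--Fernandes use to identify $\mathcal{P}(\mathcal{A})/\!\sim$ with the source-simply connected integration. Once you supply it, your route and the one behind Theorem~\ref{severa-weinstein} agree in substance. The cited model has the advantage that its candidate is built from $\mathcal{A}$ alone, which is what Proposition~\ref{isoSW} exploits. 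Yours needs an integration in hand but gives the smooth structure directly. You also need a local smooth choice of paths, which your charts provide, to conclude that the integrated map $\mathcal{H}\to\mathcal{K}$ is smooth.
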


\begin{theorem}[Higher Lie II]
    Let $\phi: \mathcal{A} \to \mathcal{B}$ be a morphism of Lie algebroids, and let $\mathcal{G}$ and $\mathcal{H}$ be two Lie groupoids integrating $\mathcal{A}$ and $\mathcal{B}$ respectively. If $\mathcal{G}$ is source-simply connected, then there exists a (unique) morphism of Lie groupoids $\Phi: \mathcal{G} \to \mathcal{H}$ integrating $\phi$.
\end{theorem}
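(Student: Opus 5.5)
The plan is to use the path-space description of source-simply connected groupoids (the Crainic--Fernandes approach), rather than the graph-of-a-morphism argument. The graph argument would require integrating the subalgebroid $\mathrm{graph}(\phi)\subset \mathcal{A}\times\mathcal{B}$ to an immersed subgroupoid of $\mathcal{G}\times\mathcal{H}$, and then showing that its projection to $\mathcal{G}$ is an isomorphism. The path approach avoids both steps.

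\emph{Setup.} Recall that an $\mathcal{A}$-path is a $C^1$ curve $a:[0,1]\to\mathcal{A}$ with $\rho(a(t))=\frac{d}{dt}\pi(a(t))$. By Higher Lie I, since $\mathcal{G}$ is source-simply connected, we may identify $\mathcal{G}$ with the \v Severa--Weinstein groupoid $\mathcal{G}(\mathcal{A})$ of $\mathcal{A}$-paths modulo $\mathcal{A}$-homotopy. Because $\phi$ is a Lie algebroid morphism, it intertwines the anchors (covering a base map $f$). Hence $\phi\circ a$ is a $\mathcal{B}$-path for every $\mathcal{A}$-path $a$.

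\emph{Step 1: integrating $\mathcal{B}$-paths into $\mathcal{H}$.} For a $\mathcal{B}$-path $b$ over the base curve $\gamma$, I solve the time-dependent ODE on the source fibre $\sigma^{-1}(\gamma(0))\subset\mathcal{H}$ given by the right-invariant vector field extending $b(t)$, with initial value $1_{\gamma(0)}$. The anchor condition guarantees the solution $h(t)$ satisfies $\tau(h(t))=\gamma(t)$, so it is well defined for all $t\in[0,1]$. I set $\int b := h(1)$. Concatenation of paths goes to products in $\mathcal{H}$, and reparametrisation does not change the endpoint.

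\emph{Step 2: homotopy invariance (main obstacle).} Define $\Phi([a])=\int(\phi\circ a)$. Well-definedness requires: if $a_\epsilon$ is an $\mathcal{A}$-homotopy, then $\phi\circ a_\epsilon$ is a $\mathcal{B}$-homotopy, and $\mathcal{B}$-homotopic paths have the same integral in $\mathcal{H}$.
\begin{itemize}
\item The first claim holds because an $\mathcal{A}$-homotopy is the same as an algebroid morphism $T([0,1]^2)\to\mathcal{A}$ with boundary conditions, and composing with $\phi$ preserves all of this.
\item For the second claim, I view the $\mathcal{B}$-homotopy as an algebroid morphism $T(I^2)\to\mathcal{B}$ and integrate it to a groupoid morphism $\mathrm{Pair}(I^2)\to\mathcal{H}$. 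Concretely, this means solving compatible ODEs in the $t$ and $\epsilon$ directions. Compatibility is the flatness (Maurer--Cartan) condition, which comes from the morphism property together with simple connectivity of $I^2$.
\item Then $\epsilon\mapsto\int b_\epsilon$ is the image of the arrow $((1,\epsilon),(0,\epsilon))$. The boundary conditions ($b_\epsilon$ constant zero at the ends $t=0,1$) force this image to be constant in $\epsilon$.
\end{itemize}
This flatness computation is where the real work lies.

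\emph{Step 3: conclusion.} Once $\Phi$ is well defined, it is multiplicative by Step 1 and smooth because ODE solutions depend smoothly on parameters. Differentiating at the units recovers $\phi$, since the derivative at $t=0$ of the solution is $b(0)$. For uniqueness, two morphisms integrating $\phi$ have the same differential along $M$. Hence they agree on the flows of right-invariant vector fields from units, and so on a neighbourhood of $M$ in each source fibre. Since source fibres are connected and generated by such neighbourhoods, the two morphisms agree everywhere.
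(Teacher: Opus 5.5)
The paper gives no proof of this statement. It is quoted from Crainic--Fernandes \cite{crainic2004integrabilityliebrackets}, and your outline is essentially the path-space argument behind that reference: $\phi$ carries $\mathcal{A}$-paths and $\mathcal{A}$-homotopies to $\mathcal{B}$-paths and $\mathcal{B}$-homotopies, and integration of $\mathcal{B}$-paths into $\mathcal{H}$ is invariant under $\mathcal{B}$-homotopy. The structure is sound. Compared with the graph argument (Mackenzie--Xu, Moerdijk--Mr\v{c}un), it avoids integrating $\mathrm{graph}(\phi)$ to an immersed subgroupoid. In exchange it depends on the smooth structure of $\mathcal{G}(\mathcal{A})$ and on the identification $\mathcal{G}\cong\mathcal{G}(\mathcal{A})$, and those are exactly the places where your sketch is thin.

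(i) In Step 2, integrating $T(I^2)\to\mathcal{B}$ to a morphism $\mathrm{Pair}(I^2)\to\mathcal{H}$ is itself an instance of the theorem. It must therefore be done by hand, and the flatness you postpone is the whole content. A direct way to do it:
\begin{itemize}
\item Write the homotopy as $\alpha\,dt+\beta\,d\epsilon$, so your $b_\epsilon$ is $\alpha_\epsilon$. Note that it is $\beta$, not $b_\epsilon$, that vanishes at $t=0,1$.
\item Let $h_\epsilon(t)$ be the Step 1 solution for $\alpha_\epsilon$, and set $\tilde\beta:=dR_{h^{-1}}(\partial_\epsilon h)$.
\item Both $\tilde\beta$ and $\beta$ solve the same linear ODE in $t$, which in Crainic--Fernandes' conventions reads $\partial_t\beta-\partial_\epsilon\alpha=T_\nabla(\alpha,\beta)$ for a connection $\nabla$ on $\mathcal{B}$. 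For $\beta$ this is exactly the morphism condition; for $\tilde\beta$ it is a computation with right-invariant fields and equality of mixed partials.
\item Both vanish at $t=0$, so $\tilde\beta=\beta$, and then $\beta(\epsilon,1)=0$ gives $\partial_\epsilon h_\epsilon(1)=0$.
\end{itemize}

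(ii) Smooth dependence on parameters only makes $a\mapsto\int(\phi\circ a)$ smooth on the Banach manifold $\mathcal{P}(\mathcal{A})$. To descend to $\mathcal{G}(\mathcal{A})$ you need that the quotient map $\mathcal{P}(\mathcal{A})\to\mathcal{G}(\mathcal{A})$ is a submersion, which is part of the Crainic--Fernandes theorem.

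(iii) For a self-contained argument, use the canonical isomorphism $\mathcal{G}(\mathcal{A})\to\mathcal{G}$ obtained by integrating $\mathcal{A}$-paths in $\mathcal{G}$. It induces the identity on $\mathcal{A}$, whereas an abstract isomorphism might not; that is harmless but must be tracked. Its proof rests on the lemma in (i) with $\phi=\mathrm{id}$, its easy converse, and source-simple connectivity. By contrast, several treatments deduce the uniqueness in Lie I from Lie II, so quoting Higher Lie I as a black box risks circularity.

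(iv) Existence of $h$ on all of $[0,1]$ comes from right-invariance: local solutions can be right-translated, uniformly along the compact base curve. It does not follow merely from $\tau(h(t))=\gamma(t)$.

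(v) Uniqueness is simpler with paths. If $\Psi$ integrates $\phi$ and $g(t)$ is the $\sigma$-fibre path of an $\mathcal{A}$-path $a$, then $\Psi(g(t))$ solves the Step 1 equation for $\phi\circ a$, so $\Psi=\Phi$ by source-connectedness. This avoids pushing sections of $\mathcal{A}$ forward to right-invariant fields on $\mathcal{H}$, which does not make sense when $\phi$ covers a non-trivial base map.
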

On the other hand, Lie's third theorem does not always hold true for the higher case. While it was earlier shown in \cite{AM85} that not every Lie algebroid admits an integration into a Lie groupoid, the (computable) obstructions were spelled out in \cite{crainic2004integrabilityliebrackets}:

\begin{theorem}[Higher Lie III]
    Let $\mathcal{A} \to M$ be a Lie algebroid over $M$.
    $\mathcal{A}$ is integrable if and only if  the monodromy groups $N_{x}(\mathcal{A})$ are locally uniformly discrete, $\forall x \in M$.
\end{theorem}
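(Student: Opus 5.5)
The plan is to follow the path-space construction of Crainic--Fernandes \cite{crainic2004integrabilityliebrackets}. One builds the candidate source-simply connected integration directly from $\mathcal{A}$ and shows that it is smooth exactly when the monodromy condition holds. The key object is the Banach manifold $P(\mathcal{A})$ of $\mathcal{A}$-paths. These are $C^1$ paths $a:[0,1]\to\mathcal{A}$ over base paths $\gamma:[0,1]\to M$ satisfying $\rho(a(t))=\dot\gamma(t)$. On $P(\mathcal{A})$ one takes the equivalence relation of $\mathcal{A}$-homotopy: a family $a_\epsilon$ is admissible if there is a family $b_\epsilon$ with $\partial_t b-\partial_\epsilon a=T_\nabla(a,b)$, where $T_\nabla$ is the torsion of an auxiliary connection, and with $b$ vanishing at $t=0,1$. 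One first checks that this relation is the leaf relation of a foliation $\mathcal{F}$ of finite codimension on $P(\mathcal{A})$, given by the orbits of an infinite-dimensional Lie algebra action. One then defines $\mathcal{G}(\mathcal{A})=P(\mathcal{A})/\mathcal{F}$. Source and target are the endpoints of $\gamma$, and multiplication is concatenation of paths, after reparametrising so that paths are constant near the endpoints. Formally this is a topological groupoid. Its $\sigma$-fibres are simply connected, and whenever it is smooth its Lie algebroid is $\mathcal{A}$. By Higher Lie I, $\mathcal{A}$ is integrable if and only if $\mathcal{G}(\mathcal{A})$ is a smooth manifold, i.e.\ if and only if the leaf space of $\mathcal{F}$ is smooth.

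The second step is to localise this smoothness question at a point $x\in M$ with orbit $L$. Restricting to $\mathcal{A}|_L$, one obtains a long exact sequence of homotopy type
\[
\pi_2(L,x)\xrightarrow{\ \partial\ } \mathcal{G}(\mathfrak{g}_x)\to \mathcal{G}(\mathcal{A})_x\to \pi_1(L,x)\to 1 .
\]
Here $\mathfrak{g}_x=\ker\rho_x$ is the isotropy Lie algebra and $\mathcal{G}(\mathfrak{g}_x)$ is its simply connected group. The map $\partial$ is obtained by lifting a sphere in $L$ to an $\mathcal{A}$-homotopy and reading off the defect at $x$. By definition $\partial$ lands in the centre $Z(\mathfrak{g}_x)$, viewed via exp, and its image is the monodromy group $N_x(\mathcal{A})$. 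From this one shows that the isotropy group $\mathcal{G}(\mathcal{A})_x$ is a Lie group, i.e.\ the $\sigma$-fibre over $x$ is smooth, if and only if $N_x$ is discrete.

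The third step, which I expect to be the main obstacle, is passing from pointwise discreteness to smoothness of the whole leaf space transversally across different orbits. For this I would construct explicit local transversals to $\mathcal{F}$ near the unit section. These are exponential charts given by $\mathcal{A}$-paths obtained from flows of time-dependent sections, with a splitting $\sigma:T_xL\to\mathcal{A}_x$ and the associated curvature. One must then show that the holonomy of $\mathcal{F}$ along such transversals is trivial, which is equivalent to the leaf space being Hausdorff and smooth. The holonomy is controlled by the monodromy groups $N_y$ for $y$ near $x$. It is here that one needs the uniform condition $\liminf_{y\to x} d(0,N_y\setminus\{0\})>0$ rather than mere discreteness of each $N_y$: it rules out monodromy elements accumulating at zero from nearby leaves, which would make the quotient non-Hausdorff. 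Translating this uniformity into the existence of a uniform neighbourhood of the zero section that no $\mathcal{A}$-homotopy class re-enters is the hard part of the argument. Conversely, if $\mathcal{G}(\mathcal{A})$ is smooth, the identity bisection has a neighbourhood meeting each isotropy group only at the unit, and this neighbourhood gives the uniform lower bound on $N_y$. This yields the equivalence.
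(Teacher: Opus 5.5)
The paper gives no proof of this statement. It only cites Crainic--Fernandes, and your outline sketches their route: the Banach manifold of $\mathcal{A}$-paths, the $\mathcal{A}$-homotopy foliation $\mathcal{F}$, the monodromy map $\partial:\pi_2(L,x)\to\mathcal{G}(\mathfrak{g}_x)$, and exponential transversals at the zero paths. So there is no difference of approach to compare. Steps 1 and 2 are acceptable as a summary, with two corrections: centrality of $\operatorname{Im}\partial$ is a proposition, not a definition, and $N_x=\exp^{-1}(\operatorname{Im}\partial)\cap Z(\mathfrak{g}_x)$ rather than $\operatorname{Im}\partial$ itself.

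The genuine gap is Step 3. It contains the whole content of the hard implication, you explicitly leave it undone, and the criterion you set yourself there is wrong.
\begin{itemize}
\item Trivial holonomy of $\mathcal{F}$ does not make the leaf space a manifold. Take $TM\oplus\mathbb{R}$ with bracket twisted by a closed $2$-form on a simply connected $M$ with dense period group. Its leaves have trivial holonomy, yet they re-enter every transversal through $0_x$, shifted by the (dense) periods.
\item Hausdorffness cannot be part of the target, because integrable algebroids can have non-Hausdorff $\mathcal{G}(\mathcal{A})$. The tangent bundle of the Reeb foliation has injective anchor, so $N_x=0$ for all $x$, yet its monodromy groupoid is not Hausdorff.
\end{itemize}
What you must show is that the quotient map $P(\mathcal{A})\to\mathcal{G}(\mathcal{A})$ is injective on a small transversal through each $0_x$. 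This gives possibly non-Hausdorff charts at the units, which multiplication then transports everywhere. Equivalently, you must show that if injectivity fails on every neighbourhood of $0_x$, one can produce nonzero $n_k\in N_{y_k}$ with $n_k\to 0_x$. That argument is the missing idea.

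Your converse is also false as stated. A neighbourhood of the unit section meets each isotropy group $\mathcal{G}_y$ in a neighbourhood of $1_y$ in $\mathcal{G}_y$, and this is never $\{1_y\}$ when $\mathfrak{g}_y\neq 0$. For $\mathcal{A}_n$ this already happens at every interior point of $\Dbarstar$, where the isotropy is one-dimensional. The correct argument uses an exponential map:
\begin{enumerate}
\item For $v$ near the zero section, extend $v$, linearly and smoothly in $v$, to a section $\alpha_v$. Let $\exp(v)$ be the class of the $\mathcal{A}$-path $t\mapsto\alpha_v(\phi^t_{\rho(\alpha_v)}(\pi(v)))$.
\item If $\mathcal{G}(\mathcal{A})$ is smooth, the differential of $\exp$ at $0_x$ is an isomorphism $T_xM\oplus\mathcal{A}_x\to T_{1_x}\mathcal{G}(\mathcal{A})$. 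Hence $\exp$ is injective on some neighbourhood $V$ of $0_x$.
\item For $v\in N_y$ we have $\rho(v)=0$, so $\exp(v)$ is the class of the constant path $v$. By definition of $N_y$, this path is $\mathcal{A}$-homotopic to $0_y$, so $\exp(v)=1_y=\exp(0_y)$.
\item Injectivity then gives $N_y\cap V=\{0_y\}$ for all $y$ near $x$, which is precisely local uniform discreteness.
\end{enumerate}
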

We refer the reader to \cite{crainic2004integrabilityliebrackets} for the technical details and the proof of this theorem. Meanwhile, 
in the special case of action Lie algebroids, this theorem simplifies to the following result, proved earlier by \cite{DAZORD199777}:
\begin{theorem}\label{actionlie}
    Every action Lie algebroid is integrable.
\end{theorem}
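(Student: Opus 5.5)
The plan is to give the classical direct construction of an integrating groupoid for an action Lie algebroid $\mathfrak g\ltimes M$, so that integrability follows from a concrete model rather than from the monodromy criterion of Higher Lie III. Let $G$ be the connected, simply connected Lie group with Lie algebra $\mathfrak g$. The vector fields $\xi_M$ need not be complete, so $G$ does not act on $M$. It does, however, \emph{act locally}: there is an open neighbourhood $\mathcal U\subset G\times M$ of $\{e\}\times M$ on which a smooth partial action $(g,x)\mapsto g\cdot x$ is defined. This follows from the flow-box theorem applied to the vector field $(\xi^R,\xi_M)$ on $G\times M$. Equivalently, $\mathcal U$ consists of those $(g,x)$ for which the time-dependent ODE $\dot x(t)=\xi(t)_M(x(t))$, with $\xi(t)$ the right logarithmic derivative of a path from $e$ to $g$, has a solution up to time $1$. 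The natural candidate groupoid is then built from paths.

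\textbf{Step 1.} Following exactly the shape of $X_n$ in the introduction, consider pairs $(u,x)$ with $u:[0,1]\to G$, $u(0)=e$, such that the induced ODE for $x(t)$ starting at $x(0)=x$ has a solution on all of $[0,1]$. Quotient by homotopies $u(t,s)$ rel endpoints along which the solution exists for all $s$. Source is $x$, target is $x(1)$, and multiplication is concatenation, with $u_2$ translated by $u_1(1)$. This is the \emph{action} analogue of Crainic--Fernandes $A$-paths. For an action algebroid, an $A$-path is precisely a pair (base path, $\mathfrak g$-valued curve $\xi(t)$), and $\xi(t)$ integrates uniquely to a path $u$ in $G$. $A$-homotopies then correspond to homotopies of such $u$. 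Hence this quotient is the Ševera--Weinstein groupoid $\mathcal G(\mathfrak g\ltimes M)$ as a topological groupoid.

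\textbf{Step 2.} Show that this quotient is a smooth manifold, i.e.\ that it is Hausdorff and that the source is a submersion. Locally, near a class $[u,x]$, a chart is given by $(g,y)$ with $g$ near $u(1)$ and $y$ near $x$. The homotopy class is locally constant under small perturbations, because the solvability condition is open and short homotopies stay inside it. Thus the quotient is locally diffeomorphic to $G\times M$, and it is an étale-over-$G\times M$ space. Its Lie algebroid is then $\mathfrak g\ltimes M$, by differentiating at constant paths where the chart is $\mathcal U$.

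\textbf{Step 3.} Verify that the monodromy is discrete, which in this language amounts to the Hausdorff and smoothness claim. The monodromy group at $x$ is the image of $\pi_2$ of the orbit-related path space. Since the chart above identifies a neighbourhood of the units with $\mathcal U\subset G\times M$ injectively, no nontrivial monodromy element can approach $0$, and by Higher Lie III uniform discreteness holds.

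The main obstacle is Step 2: local injectivity of the chart map $[u',y]\mapsto(u'(1),y)$. One has to show that two paths with the same endpoint in $G$ and nearby starting points, both close to $u$, are homotopic through admissible paths. I would prove this by using simple connectivity of $G$ locally: the straight-line homotopy in a chart of $G$ around $u(1)$ stays in the open admissible set, by compactness of $[0,1]$ and continuity of ODE solutions in parameters.
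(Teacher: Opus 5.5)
The paper does not prove this statement. It quotes it from Dazord and presents it as the action-algebroid case of Higher Lie III, so your proposal supplies an argument the paper omits. Its core is sound and is essentially the standard proof. The endpoint $u(1)$ is invariant under admissible homotopies, so $[u,x]\mapsto u(1)$ is a groupoid morphism $\mathcal G(\mathfrak g\ltimes M)\to G$ integrating the projection $\mathfrak g\ltimes M\to\mathfrak g$. Take a monodromy element $v\in N_x\subset\mathfrak g_x$, meaning the constant $A$-path $v$ is $A$-homotopic to $0_x$. Its path is $u(t)=\exp(tv)$, so $\exp_G(v)=e$. Hence every $N_x$ misses a fixed neighbourhood of $0\in\mathfrak g$ on which $\exp_G$ is injective. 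That is uniform discreteness, and Higher Lie III finishes.

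This only uses homotopy invariance of $u(1)$, not injectivity of your chart. Your Step 2 charts $(g,y)$ also work and give the smooth structure directly, without Higher Lie III. The quotient map is open, because admissible homotopies persist for nearby initial points by compactness of $[0,1]^2$. The straight-line homotopy near a fixed representative gives local injectivity.

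\textbf{The genuine error} is your claim, in Step 2 and again in Step 3, that the quotient is Hausdorff, and that discreteness of monodromy amounts to this. That is false. Being locally diffeomorphic to $G\times M$ cannot give it: a local homeomorphism onto a Hausdorff space need not have Hausdorff domain. A counterexample:
\begin{itemize}
\item Let $\mathfrak g=\mathbb{R}^2$ act on $M=\mathbb{R}^3\setminus\{0\}$ by $\partial_{x_1},\partial_{x_2}$.
\item Take $u(t)=(\cos 2\pi t-1,\sin 2\pi t)$, $x=(1,0,0)$ and $x_k=(1,0,1/k)$.
\item The base path of $(u,x)$ winds around the puncture of the leaf $\{x_3=0\}\setminus\{0\}$, so $[u,x]\neq 1_x$.
\item The leaf through $x_k$ is a plane, so $[u,x_k]=1_{x_k}$.
\item In the quotient topology the sequence $1_{x_k}=[u,x_k]$ converges both to $[u,x]$ and to $1_x$.
\end{itemize}
So the \v{S}evera--Weinstein groupoid of an action algebroid can be non-Hausdorff; Crainic--Fernandes' criterion yields smoothness, not separation.

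The repair is to drop Hausdorffness. The theorem holds in the convention where the arrow space need not be Hausdorff and only $M$ and the source fibres must be. Within a single source fibre, your charts together with left cancellation in the groupoid do separate points.

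A smaller slip: your ``equivalently'' description of $\mathcal U$, via arbitrary paths from $e$ to $g$, is path-dependent and does not define a single-valued local action. This is exactly the $n$-th root ambiguity the paper exhibits for $n>1$. Your Steps 1--2 only use paths, so nothing depends on it.
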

This theorem allows to affirmatively answer Question \ref{question1}. 
In the next paragraph, we briefly review the classical construction of the integrating groupoid \textit{à la} \v{S}evera--Weinstein \cite{Severa01}. 
\begin{definition}\label{apathhomotopy}
    Let $\pi:\mathcal{A}\rightarrow M$ be a Lie algebroid with anchor map $\rho$.
    An $\mathcal{A}$-path is a $C^1$ curve $a: I \to \mathcal{A}$ such that:
    \begin{equation*}
    \rho(a(t)) = \frac{d}{dt} \pi(a(t)).
    \end{equation*}
\end{definition}
We will denote the set of $\mathcal{A}$-paths by $\mathcal{P}(\mathcal{A})$. It is useful to think of an $\mathcal{A}$-path as the Lie algebroid morphism
\begin{equation}
    a dt:TI \rightarrow \mathcal{A}.
\end{equation}
One can then define an \textit{$\mathcal{A}$-path homotopy} between two $\mathcal{A}$-paths $a_1:TI \rightarrow \mathcal{A}$ and $a_2:TI \rightarrow \mathcal{A}$ to be the Lie algebroid morphism \cite{fernandes2021localglobalintegrabilitylie}
\begin{equation*}
    h:T(I\times I)\xrightarrow{}\mathcal{A},
\end{equation*}
satisfying the boundary conditions:
\begin{equation}\label{bc}
h|_{TI\times \{0\}}=a_1, \quad h|_{TI\times \{1\}}=a_2, \quad h|_{\{0\}\times TI}=h|_{\{1\}\times TI}=0. 
\end{equation}

\begin{theorem}[{{\cite{crainic2004integrabilityliebrackets}}}]
\label{severa-weinstein}
    Whenever a Lie algebroid $\mathcal{A}$ is integrable, the quotient 
\begin{equation}
    \mathcal{G}(\mathcal{A}) = \frac{\mathcal{P}({\mathcal{A})}}{\mathcal{A}{\text{-path homotopy }}}
\end{equation}
admits a smooth structure, and it is the unique (up to isomorphisms) source-simply connected Lie groupoid integrating $\mathcal{A}$.
\end{theorem}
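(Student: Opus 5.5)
The plan is to deduce the statement from Higher Lie I and Higher Lie II, by identifying $\mathcal{A}$-paths modulo $\mathcal{A}$-homotopy with the arrows of a source-simply connected integration.

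\emph{Step 1: a source-simply connected integration.} By hypothesis $\mathcal{A}=\text{Lie}(\mathcal{H})$ for some Lie groupoid $\mathcal{H}\rightrightarrows M$. Higher Lie I gives a source-simply connected Lie groupoid $\mathcal{G}\rightrightarrows M$ integrating $\mathcal{A}$, unique up to isomorphism. Concretely, one can take $\mathcal{G}$ to be the union of the universal covers, based at the units, of the source fibres of the identity component of $\mathcal{H}$. It then suffices to construct a bijection $\mathcal{P}(\mathcal{A})/\!\sim\;\to\mathcal{G}$ that respects sources, targets and multiplication, and to transport the smooth structure of $\mathcal{G}$ along it.

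\emph{Step 2: the map on paths.} Let $a\in\mathcal{P}(\mathcal{A})$ have base path $\gamma=\pi\circ a$. Extend $a(t)$ to a time-dependent section $\alpha_t$ with $\alpha_t(\gamma(t))=a(t)$, and let $\overrightarrow{\alpha_t}$ be the corresponding right-invariant, $\sigma$-vertical vector field on $\mathcal{G}$. Solve
\begin{equation*}
\dot g(t)=\overrightarrow{\alpha_t}\big(g(t)\big),\qquad g(0)=\iota(\gamma(0)).
\end{equation*}
The anchor condition $\rho(a)=\dot\gamma$ gives $\tau(g(t))=\gamma(t)$, and $g(t)$ stays in the source fibre $\sigma^{-1}(\gamma(0))$. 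Right-invariance shows that $g$ does not depend on the extension $\alpha_t$. Conversely, every $C^1$ path $g$ in a source fibre starting at a unit determines an $\mathcal{A}$-path by right-translating $\dot g(t)$ back to $M$. So we obtain a bijection between $\mathcal{A}$-paths and $\sigma$-fibre paths starting at units, and concatenation of $\mathcal{A}$-paths corresponds to the groupoid product. Define $\Phi(a)=g(1)$.

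\emph{Step 3: homotopies.} This is the step I expect to be the main obstacle: showing that $a_1\sim a_2$ holds exactly when the paths $g_1,g_2$ are homotopic inside one $\sigma$-fibre with both endpoints fixed. For the forward direction, view an $\mathcal{A}$-homotopy $h:T(I\times I)\to\mathcal{A}$ as an algebroid morphism out of the tangent algebroid of $I\times I$. That tangent algebroid is integrated by the pair groupoid of $I\times I$, which is source-simply connected. Higher Lie II then gives a groupoid morphism $H:\text{Pair}(I\times I)\to\mathcal{G}$. Setting $g(t,s)=H\big((t,s),(0,s)\big)$ produces a family of fibre paths. The boundary conditions \eqref{bc} ensure that $g(0,s)$ is a unit and $g(1,s)$ is constant, so this family is the required fixed-endpoint homotopy between $g_1$ and $g_2$.

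For the converse, start from a fibre homotopy $g(t,s)$ and right-translate both $\partial_t g$ and $\partial_s g$ to obtain $a(t,s)$ and $b(t,s)$. Compatibility of mixed partials, equivalently the Maurer--Cartan equation for right-invariant forms, is precisely the condition that $a\,dt+b\,ds$ is an algebroid morphism. The fixed endpoints give $b=0$ on $\{0,1\}\times I$. The delicate point is making the right-invariant extension argument rigorous in families; for this I would use a time- and $s$-dependent connection on $\mathcal{A}$ as in \cite{crainic2004integrabilityliebrackets}.

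\emph{Step 4: conclusion.} By Step 3 and the simple connectivity of the source fibres, $\Phi$ descends to a bijection $\mathcal{P}(\mathcal{A})/\!\sim\;\to\mathcal{G}$. Surjectivity holds because every arrow is joined to its unit by a path in its source fibre, since source fibres are connected. Injectivity holds because any two such paths are homotopic rel endpoints, as source fibres are simply connected. The bijection intertwines source, target (endpoint of $\gamma$) and multiplication (concatenation). Pulling back the manifold structure of $\mathcal{G}$ then makes $\mathcal{G}(\mathcal{A})$ a Lie groupoid isomorphic to $\mathcal{G}$. It is therefore the source-simply connected integration of $\mathcal{A}$, and it is unique up to isomorphism by Higher Lie I.
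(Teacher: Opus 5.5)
The paper gives no proof of this statement; it quotes it from \cite{crainic2004integrabilityliebrackets}. Your proposal is correct and is essentially the argument given there. Right-translation identifies $\mathcal{A}$-paths with paths in a source fibre starting at a unit, $\mathcal{A}$-homotopies correspond to fibrewise homotopies rel endpoints, and so $\mathcal{P}(\mathcal{A})/\!\sim$ becomes the monodromy groupoid of any integration.

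The one real deviation is the forward direction of Step 3. You obtain the fibre homotopy from Lie II applied to $\mathrm{Pair}(I\times I)$. The direct argument in \cite{crainic2004integrabilityliebrackets} runs as follows:
\begin{itemize}
\item Let $g(\cdot,s)$ be the fibre path of $a(\cdot,s)=h(\partial_t)$, and put $\beta=(R_{g^{-1}})_*\partial_s g$.
\item Check that $\beta$ and $b=h(\partial_s)$ solve the same linear equation in $t$ (with a connection, $\partial_t b-\partial_s a=T_\nabla(a,b)$), with $\beta(0,s)=b(0,s)=0$.
\item Hence $\beta=b$, and $b(1,s)=0$ makes $g(1,s)$ constant.
\end{itemize}
Your route is shorter but has three costs. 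First, it needs a version of Lie II for morphisms covering a non-identity base map, since $h$ lies over $\gamma:I\times I\to M$. That version must be proved independently of $\mathcal{A}$-paths, e.g.\ by the graph argument of Moerdijk--Mr\v{c}un, because the path-theoretic proof of Lie II runs through exactly the correspondence you are establishing. Second, it requires care with the corners of $I\times I$. Third, unwinding Lie II for a pair groupoid essentially reproduces the direct computation anyway. The direct argument is self-contained and avoids all of this.

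There are also some minor points.
\begin{itemize}
\item In Step 2, existence of $g(t)$ on all of $[0,1]$ needs a word, since $\overrightarrow{\alpha_t}$ need not be complete. Right-invariance plus compactness of $\gamma(I)$ give a uniform existence time for solutions starting at units, and right-translating these continues any solution.
\item You use the convention $\ker T\sigma|_M$ with right-invariant fields and anchor $T\tau$. Section~3 of the paper uses the opposite one. This is harmless, since inversion intertwines them, but you should say so.
\item Transporting the smooth structure along your bijection proves the statement as literally written. In \cite{crainic2004integrabilityliebrackets}, however, the smooth structure is the one for which $\mathcal{P}(\mathcal{A})\to\mathcal{G}(\mathcal{A})$ is a submersion of Banach manifolds, and this is what makes it canonical rather than imposed by fiat.
\end{itemize}
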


 This theorem provides us with $\mathcal{G}(\mathcal{A}_n)$, the unique source-simply connected Lie groupoid integrating the action Lie algebroid $\mathcal{A}_n$.
 \begin{remark}
     Note that the groupoid $\mathcal{G}(\mathcal{A})$ can always be constructed without any integrability criteria, and it produces a topological groupoid only. Obstructions may show up when one tries to equip it with a smooth structure, and that is when the integrability criteria shows up. This construction was suggested by Cattaneo--Felder \cite{Cattaneo_2001} for cotangent Lie algebroids, and then by \v{S}evera and Weinstein \cite{Severa01} for general Lie algebroids.
 \end{remark}
 
\section{An explicit construction of the integrating groupoid}\label{section4}
While the \v{S}evera--Weinstein construction guarantees the existence of a source-simply connected groupoid integrating $\mathcal{A}_n$, it remains in the present case too abstract to make the $n$-fold Möbius symmetry manifest. We therefore propose a more explicit model for the integrating groupoid, built directly from paths in $\psu$. Beyond providing a concrete description, this model clarifies precisely how the obstruction to extending the $\psun$ action from the boundary to the interior of the punctured disk is encoded in the groupoid structure.

\subsection{Integrating groupoid}
\begin{definition}
    Define $X_n$ to be the space of pairs consisting of a smooth path in $\psu$ and a point on the punctured disk, subject to conditions:
    \begin{equation*}
    X_{n} := \{ (u: [0,1] \to \psu, z \in \Dbarstar) \mid u(0)=e, u(t) \cdot z^{n} \neq 0 \}.
    \end{equation*}
\end{definition}
\begin{proposition}\label{lift}
    For every $(u(t), z) \in X_n$, there exists a unique smooth path $z(t) \in \Dbarstar$ such that:
    \begin{equation*}
    z(0) = z \quad \text{and} \quad z(t)^n = u(t) \cdot z^n
    \end{equation*}
\end{proposition}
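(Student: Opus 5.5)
The plan is to treat this as a path-lifting problem for the map $p_n:\Dbarstar\to\Dbar\setminus\{0\}$, $z\mapsto z^n$, which is a genuine $n$-sheeted covering map (unlike its extension over the origin). Set $w(t):=u(t)\cdot z^n$. This is a smooth path in $\Dbar$, since the Möbius action \eqref{mobius} is smooth in $(g,w)$ and $u$ is smooth. It starts at $w(0)=e\cdot z^n=z^n$, and the defining condition of $X_n$ says exactly that $w(t)\neq 0$ for all $t$. So $w$ is a path in $\Dbar\setminus\{0\}$. Moreover $|w(t)|\le 1$, because $\psu$ preserves $\Dbar$. We are therefore asking for a lift of $w$ through $p_n$ starting at the prescribed preimage $z$ of $w(0)$.

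First we check that $p_n:\Dbarstar\to\Dbar\setminus\{0\}$ is a covering map. In polar coordinates $\rho e^{i\phi}\mapsto \rho^n e^{in\phi}$, it is the product of the homeomorphism $\rho\mapsto\rho^n$ of $(0,1]$ with the standard $n$-fold cover of $S^1$. Alternatively, as noted before Proposition \ref{vfn}, $p_n'(z)=nz^{n-1}\neq0$, so $p_n$ is a local diffeomorphism, and evenly covered neighbourhoods are small sectors. The unique path lifting property of coverings then gives a unique continuous $z(t)$ with $z(0)=z$ and $p_n(z(t))=w(t)$.

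Smoothness is local. Near each $t_0$, the path $z(t)$ stays in a sheet $U$ on which $p_n|_U$ is a diffeomorphism onto its image. There $z(t)=(p_n|_U)^{-1}(w(t))$ is a composition of smooth maps. Uniqueness among smooth paths follows from uniqueness among continuous ones.

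One can also make the lift explicit. Since $w$ avoids $0$, write $w(t)=r(t)e^{i\theta(t)}$ with $r>0$ smooth and $\theta$ a smooth continuous argument, choosing $\theta(0)$ with $z=r(0)^{1/n}e^{i\theta(0)/n}$. Then set $z(t)=r(t)^{1/n}e^{i\theta(t)/n}$. This matches the branch choice made in the proof of Proposition \ref{vfn}.

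The only point needing care, and the likely main obstacle, is the boundary. The spaces involved are manifolds with boundary, so one must check that the local inverse branches of $p_n$ are smooth up to $|z|=1$. This holds because $z\mapsto z^n$ is the restriction of a holomorphic local diffeomorphism on a neighbourhood of the closed annulus, and $p_n$ maps $|z|=1$ onto $|w|=1$. Hence the lifted path $z(t)$ stays in $\Dbarstar$.
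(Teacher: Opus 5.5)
Your proof is correct and takes the same route as the paper: view $z\mapsto z^n$ as an $n$-fold covering of $\Dbarstar$ over itself, then apply unique path lifting to $t\mapsto u(t)\cdot z^n$, which avoids $0$ by the definition of $X_n$, starting at $z$. You also check the covering property, the smoothness of the lift (via local inverse branches, including up to the boundary circle), and give the explicit formula $z(t)=r(t)^{1/n}e^{i\theta(t)/n}$; the paper leaves all of these implicit.
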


\begin{proof}[Proof]
    Let $p: \Dbarstar \to \Dbarstar$ be the $n$-fold covering map: $p(z)=z^n$, with initial value $z(0)=z$.
    By the Path Lifting Lemma (see diagram below), there exists a lift $z(t)$ such that $p(z(t)) = u(t) \cdot z^n=z(t)^n$.
    Uniqueness is guaranteed since we fix $z(0)$.

\begin{equation*}
\begin{tikzcd}
& {\Dbarstar} \arrow[d, "p"] \\
I=[0,1] \arrow[ur, "{z(t)}"] \arrow[r, "\gamma"'] & \Dbarstar
\end{tikzcd}
\end{equation*}

\end{proof}
\begin{definition}
\label{homotopy}
   We consider a family of Lie groupoids
   \begin{equation}
       \mathcal{G}_{n}' = X_{n} / \sim'
   \end{equation} where the equivalence relation 
    $ \sim'$ is defined as follows:
\begin{equation}
(u(t),z)\sim'(v(t),w)
    \end{equation}
    if and only if
$w=z$ and there exists a smooth homotopy $u(t,s)$ fixing the endpoints such that: 
\begin{equation*}
   u(t,0) = u(t), \quad  u(t,1) = v(t), \quad u(t,s)\cdot z^n \neq 0, \forall\ t,s.
\end{equation*}
The source and target maps of $\mathcal{G}_{n}'$ are defined by the formula
    \begin{equation*}
        \sigma'(u(t), z) = z, \quad \tau'(u(t), z) = z(1).
    \end{equation*}
\end{definition}
Given two equivalence classes of paths in $\mathcal{G}_n'$, $[(u_1,z)]$ and $ [(u_2,w)]$, such that they are composable, i.e. $\tau'(u_1(t),z)=w=z(1)$, we can define a product rule in $\mathcal{G}_{n}'$, given by concatenation of (homotopy classes of) paths:
 \begin{equation*}
 [(u_2,w)] \circ [(u_1,z)]\ = [(u_2\circ u_1,z)],
 \end{equation*}
 with\footnote{For notation simplicity, we will remove the square brackets from now on, but we will always consider composition of paths under homotopy comprised.}
 \begin{equation}\label{pathconcat}
 (u_2\circ u_1)(t)=
   \begin{cases}
      u_1(2t) & t \in [0,\frac{1}{2}]\\
     u_2(2t-1). u_1(1) & t \in [\frac{1}{2}, 1]
    \end{cases},
\end{equation}
where $.$ is the matrix multiplication in $\psu$.
This composition is associative up to reparametrization (which is a homotopy)\footnote{Note that concatenation of smooth paths does not have to be smooth; one can remedy this by reparametrizing paths so that they are constant near endpoints, see \textit{sitting endpoints, \cite{meinrenkengroupoids}}.}. Indeed, 
\begin{equation}
    (u_3 \circ u_2) \circ u_1=  \begin{cases}
      u_1(4t) & t \in [0,\frac{1}{4}]\\
      u_2(4t-1).u_1(1) & t \in [\frac{1}{4}, \frac{1}{2}]\\
      u_3(2t-1).u_2(1).u_1(1)  & t \in [\frac{1}{2}, 1]
      
    \end{cases}
\end{equation}

and 
\begin{equation}
     u_3 \circ (u_2 \circ u_1)=  \begin{cases}
      u_1(2t) & t \in [0,\frac{1}{2}]\\
     u_2(4t-2). u_1(1) & t \in [\frac{1}{2}, \frac{3}{4}]\\
    u_3(4t-3).u_2(1).u_1(1)& t \in [\frac{3}{4}, 1]\\
      
    \end{cases}
\end{equation}
It is easy to see that these two compositions are the same upon the time reparametrization:
\begin{equation*}
\varphi:[0,1]\to[0,1],\qquad
\varphi(t)=
\begin{cases}
2t, & t\in\left[0,\frac14\right],\\
t+\frac14, & t\in\left[\frac14,\frac12\right],\\
\frac{t+1}{2}, & t\in\left[\frac12,1\right].
\end{cases}
\end{equation*}
\begin{theorem}\label{gn'}
    The space $\mathcal{G}_n'=X_n/\sim'$ with source and target maps $\sigma'$ and $\tau'$, and multiplication map $\circ$, is the unique up to isomorphism source-simply connected Lie groupoid integrating the Lie algebroid $\mathcal{A}_n.$
\end{theorem}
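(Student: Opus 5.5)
The plan is to build an explicit bijection between $X_n/\sim'$ and the \v{S}evera--Weinstein groupoid $\mathcal{G}(\mathcal{A}_n)$ of Theorem \ref{severa-weinstein}, and to check that it respects the groupoid operations. Because $\mathcal{A}_n$ is an action algebroid, an $\mathcal{A}_n$-path is a pair $(\xi(t),x(t))$ with $\xi:I\to\slR$ and $\dot x(t)=\rho_n(\xi(t))(x(t))$. To $(u,z)\in X_n$ I associate $\xi(t)=\dot u(t)u(t)^{-1}$, the right logarithmic derivative, together with the lifted path $x(t)=z(t)$ from Proposition \ref{lift}.

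\textbf{Step 1: this is an $\mathcal{A}_n$-path.} Locally, $z(t)=\bigl(u(t)\cdot z^n\bigr)^{1/n}$ on the branch selected by continuity. Differentiating at time $t$ and writing $u(t+h)=e^{h\xi(t)+O(h^2)}u(t)$ reduces the derivative to the computation in Proposition \ref{vfn}, now performed at the point $z(t)$. This gives $\dot z(t)=\rho_n(\xi(t))(z(t))$.

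\textbf{Step 1': inverse map.} Given an $\mathcal{A}_n$-path $(\xi,x)$, I solve $\dot u=\xi u$ with $u(0)=e$; this is globally solvable on $I$ because $\psu$ is a Lie group. I then compare $w(t)=x(t)^n$ with $u(t)\cdot x(0)^n$. Since $p_n$ intertwines $\rho_n$ with the Möbius infinitesimal action $\rho_1$, both curves solve the same ODE $\dot w=\rho_1(\xi(t))(w)$ on $\Dbar$. Uniqueness of ODE solutions gives $u(t)\cdot x(0)^n=x(t)^n\neq 0$. Hence $(u,x(0))\in X_n$, and by the uniqueness in Proposition \ref{lift} the two constructions are mutually inverse.

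\textbf{Step 2: matching the equivalence relations.} I will show that $\mathcal{A}_n$-path homotopies correspond exactly to homotopies $u(t,s)$ rel endpoints with $u(t,s)\cdot z^n\neq 0$.
\begin{itemize}
\item Given such a $u(t,s)$, set $h=\partial_t u\,u^{-1}\,dt+\partial_s u\,u^{-1}\,ds$ over the base map $z(t,s)$, obtained by lifting as in Proposition \ref{lift} in both variables. Then $h$ is a Lie algebroid morphism $T(I\times I)\to\mathcal{A}_n$, because it is a flat Maurer--Cartan form pulled back along a map into the action groupoid restricted to the open set where $p_n$ is a local diffeomorphism.
\item The boundary conditions \eqref{bc} hold because $u(0,s)=e$ and $u(1,s)$ is fixed, so the $ds$-components vanish on $\{0\}\times I$ and $\{1\}\times I$.
\item Conversely, a morphism $h$ gives a flat $\slR$-valued connection on $I\times I$. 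Integrating it produces $u(t,s)$, and the same ODE-uniqueness argument as in Step 1' shows $u(t,s)\cdot z^n\neq 0$ and $u(1,s)$ constant.
\end{itemize}
This is the standard description of $\mathcal{G}(\mathfrak{g}\ltimes M)$ for action algebroids, adapted to the constraint. Concatenation \eqref{pathconcat} corresponds to concatenation of $\mathcal{A}$-paths, since right translation by $u_1(1)$ leaves $\xi$ unchanged. Source and target correspond to $x(0)$ and $x(1)$.

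\textbf{Step 3: smooth structure and source-simple-connectedness.} Although Theorem \ref{severa-weinstein} already supplies these, I will also check them directly as a consistency test. For fixed $z$, let $U_z=\{g\in\psu: g\cdot z^n\neq 0\}$, which is the complement of a coset of the circle stabilizer of $0$. This set has codimension $2$ in the $3$-manifold $\psu$, so it is connected and open. The source fiber $\sigma'^{-1}(z)$ is then exactly the space of homotopy classes rel endpoints of paths in $U_z$ starting at $e$, that is, the universal cover $\widetilde{U_z}$, which is simply connected.

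Globally, the map $[u,z]\mapsto(u(1),z)$ onto the open set $\{(g,z): g\cdot z^n\neq0\}\subset\psu\times\Dbarstar$ is a local homeomorphism. I pull back the smooth structure along it. In these charts, $\sigma'$ and $\tau'$ are submersions and multiplication is smooth. Differentiating at the units identifies $\ker T\tau'|_{\Dbarstar}$ with $\slR\times\Dbarstar$, with anchor $\rho_n$, by Step 1.

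\textbf{Main obstacle.} I expect Step 2 to be the hardest part: showing rigorously that every $\mathcal{A}_n$-homotopy integrates to a homotopy in $\psu$ that stays inside the constraint region $u\cdot z^n\neq0$ and keeps the endpoint fixed. My first attempt will use the flatness of $h$ together with the intertwining $p_{n*}\rho_n=\rho_1$, reducing everything to the known case $n=1$, where the action groupoid $\psu\ltimes\Dbar$ is available.
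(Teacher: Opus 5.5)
Your proposal is correct and follows essentially the same route as the paper, namely Proposition \ref{isoSW} followed by the uniqueness of the source-simply connected integration: the map $(u,z)\mapsto(\dot u\,u^{-1},z(t))$ into the \v{S}evera--Weinstein groupoid, ODE uniqueness for the inverse, matching endpoint-fixed homotopies that avoid $u\cdot z^n=0$ with $\mathcal{A}_n$-homotopies via the right Maurer--Cartan form, and compatibility with concatenation. Your Step 3, which identifies $\sigma'^{-1}(z)$ with $\widetilde{U_z}$ and builds charts from $[u,z]\mapsto(u(1),z)$, is an extra direct check that the paper does not make, since it simply transports the smooth structure through $\Phi$; it is a useful consistency test but is not needed for the conclusion.
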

In order to prove this theorem, we first establish the following intermediate proposition.
Let 
\begin{equation*}
    \mathcal{G}(\mathcal{A}_n)=\mathcal{P}(\mathcal{A}_n)/\sim_{\mathcal{A}_n}
\end{equation*}
be the \v{S}evera--Weinstein groupoid corresponding to the Lie algebroid $\pi:\An\rightarrow \Dbarstar$. The equivalence $\sim_{\mathcal{A}_n}$ here stands for $\A_n$-path homotopy as in Definition \ref{apathhomotopy}. Since $\An=\mathfrak{sl}_2(\mathbb{R}) \ltimes \Dbarstar$, every $\An$-path can be written as $a(t)=(X(t),z(t))$, where $X(t)\in \mathfrak{sl}_2(\mathbb{R})$ and $z(t)\coloneq \pi(a(t)) \in \Dbarstar$ is the base path.
\begin{proposition}\label{isoSW}
The map $\Phi:\mathcal{G}_n'\longrightarrow\mathcal{G}(\mathcal{A}_n),$ defined as
\begin{equation*}
\Phi\big([(u,z)]\big)=\big[a(t)\big],
\end{equation*}
with $a(t)=(X(t),z(t))$ such that
\begin{align*}
z(t)^n=u(t)\cdot z^n,\qquad
X(t) &\coloneqq u'(t)u(t)^{-1},
\end{align*}
is a Lie groupoid isomorphism.
\end{proposition}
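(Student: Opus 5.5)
The plan is to build $\Phi$ and its inverse at the level of paths and homotopies, using the fact that for an action algebroid the data of an $\mathcal{A}_n$-path is essentially the right-logarithmic derivative of a path in $\psu$. Throughout, write $w=z^n$ for the point of $\Dbarstar$ downstairs and $X_{\D}$ for the Möbius vector field of $X\in\slR$ on $\Dbar$. The key identity is
\begin{equation*}
\frac{d}{dt}\big(u(t)\cdot w\big)=X_{\D}\big(u(t)\cdot w\big),\qquad X(t)=u'(t)u(t)^{-1}.
\end{equation*}
By the computation in the proof of Proposition \ref{vfn}, the anchor $\rho_n(X)$ at a point $\zeta\in\Dbarstar$ is the unique lift of $X_{\D}(\zeta^n)$ through the local diffeomorphism $p_n$, i.e. $\rho_n(X)(\zeta)=\frac{1}{n\zeta^{n-1}}X_{\D}(\zeta^n)$ (after fixing matching sign conventions). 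Differentiating $z(t)^n=u(t)\cdot z^n$ gives $n z(t)^{n-1}z'(t)=X(t)_{\D}(z(t)^n)$. Hence $z'(t)=\rho_n(X(t))(z(t))$, so $a=(X,z)$ is an $\mathcal{A}_n$-path. Here $z(t)$ is the path provided by Proposition \ref{lift}.

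\textbf{Inverse on paths.} Given an $\mathcal{A}_n$-path $(X(t),z(t))$, solve the linear ODE $u'=Xu$, $u(0)=e$ in $\psu$; it has a global solution on $[0,1]$. Both $t\mapsto z(t)^n$ and $t\mapsto u(t)\cdot z(0)^n$ solve the same ODE $\dot\zeta=X(t)_{\D}(\zeta)$ with the same initial value. By uniqueness they coincide, so $u(t)\cdot z(0)^n=z(t)^n\neq 0$, and $(u,z(0))\in X_n$. This gives a bijection between $X_n$ and $\mathcal{P}(\mathcal{A}_n)$ at the level of paths.

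\textbf{Homotopies (the main obstacle).} An $\mathcal{A}_n$-path homotopy $h=a\,dt+b\,ds$ in the action algebroid consists of $X(t,s),Y(t,s)\in\slR$ over a base map $z(t,s)$. The morphism condition contains the zero-curvature equation $\partial_t Y-\partial_s X=[X,Y]$ (up to a convention-dependent sign), together with the anchor conditions. I would read $X\,dt+Y\,ds$ as a flat $\slR$-valued connection on the simply connected square and integrate it to a unique $u(t,s)$ with $u(0,s)=e$, $\partial_t u\,u^{-1}=X$ and $\partial_s u\,u^{-1}=Y$.

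The boundary conditions \eqref{bc} give $Y(0,s)=Y(1,s)=0$, so $u(1,s)$ is constant in $s$ and the endpoints are fixed. The same ODE-uniqueness argument as above, run in both variables, shows $u(t,s)\cdot z^n=z(t,s)^n\neq0$. The source is fixed because $b(0,s)=0$ forces $z(0,s)$ to be constant. Conversely, a homotopy $u(t,s)$ as in Definition \ref{homotopy} yields $X,Y$ and a lifted $z(t,s)$ defining a Lie algebroid morphism $T(I\times I)\to\mathcal{A}_n$. Some care is needed with smoothness near the corners, handled by sitting-endpoint reparametrizations. This shows $\Phi$ is well defined and bijective on classes.

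\textbf{Groupoid structure and smoothness.} Compatibility with source and target is immediate from $z(0)=z$ and $z(1)=\tau'$. For multiplication, the right-logarithmic derivative of $u_2(2t-1)\,u_1(1)$ equals $2u_2'(2t-1)u_2(2t-1)^{-1}$, so concatenation in \eqref{pathconcat} maps to concatenation of $\mathcal{A}_n$-paths; units and inverses follow similarly. For smoothness, the correspondence $u\leftrightarrow X$ is a diffeomorphism of Banach path spaces that intertwines the two homotopy foliations. The smooth structure of $\mathcal{G}(\mathcal{A}_n)$ from Theorem \ref{severa-weinstein} (leaf space of this foliation) therefore transports to $\mathcal{G}_n'$, and $\Phi$ is a diffeomorphism, hence a Lie groupoid isomorphism. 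I expect the homotopy step, especially the careful sign and convention matching in the zero-curvature condition and the boundary behaviour, to require the most work.
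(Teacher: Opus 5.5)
Your proposal is correct and follows the same route as the paper. Both arguments use a path-level bijection $(u,z)\leftrightarrow(u'u^{-1},z(t))$ justified by ODE uniqueness for $z(t)^n$ and $u(t)\cdot z^n$, a two-way passage between homotopies $u(t,s)$ and $\mathcal{A}_n$-homotopies, compatibility with concatenation, and transport of the smooth structure from $\mathcal{G}(\mathcal{A}_n)$. Your use of the zero-curvature equation to integrate $X\,dt+Y\,ds$ to $u(t,s)$ with $\partial_s u\,u^{-1}=Y$ is a cleaner justification of a step the paper leaves loose: the paper integrates only in $t$ for each fixed $s$ and then simply asserts $\partial_s u(1,s)=0$.
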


\begin{proof}
We proceed in three steps. Firstly we prove that there is a bijection between $X_n$ and \(\mathcal{P}(\mathcal{A}_n)\), prior to quotienting by homotopies.
For \((u,z)\in X_n\), Proposition \ref{lift} gives the lift $z(t)$ with \(z(0)=z\) and $z(t)^n=u(t)\cdot z^n$. Differentiating $z(t)^n=u(t)\cdot z^n$ and using the same calculation as in the proof of Proposition \ref{vfn} with $X(t)=u'(t)u(t)^{-1}$ the infinitesimal Möbius action, we obtain
\begin{equation*}
   \dot z(t)=\rho_n(u'(t)u(t)^{-1},z(t)). 
\end{equation*}
Thus the curve $a(t)$ satisfies the anchor condition hence it is an $\mathcal{A}_n$-path. Conversely, let $a(t)=(X(t),z(t))$ be an $\mathcal{A}_n$-path. Let $u(t)$ be the unique solution of the ODE 
\begin{equation*}
u'(t)u(t)^{-1}=X(t), \qquad u(0)=e.
\end{equation*}
The anchor condition implies that $z(t)^n$ and $u(t)\cdot z(0)^n$ solve the same ODE with the same initial conditions. By uniqueness of solutions, they are then equal: $z(t)^n= u(t)\cdot z(0)^n$.
Because $\A_n$ is a Lie algebroid over $\Dbarstar$, its base path $z(t)\neq 0$, $\forall t$. This means that $z(t)^n\neq 0$ and therefore $u(t)\cdot z(0)^n\neq0$, $\forall t$. Hence $(u(t),z(0))\in X_n$. This gives a bijection $\bar{\Phi}:X_n\to\mathcal{P}(\mathcal{A}_n)$.

Secondly, we prove that the bijection descends to homotopy classes. Suppose that $(u,z) \sim' (v,z)$ and let $u(t,s)$ be the corresponding homotopy fixing the endpoints. Let then $z(t,s)$ be the lift determined by
\begin{equation*}
    z(0,s)=z, \qquad u(t,s)\cdot z^n=z(t,s)^n.
\end{equation*}
Define now a map $h: T(I \times I)\rightarrow \mathcal{A}_n$ as follows:
\begin{equation}
    h(\partial t)=(\partial_t u\, u^{-1}, z(t,s)) \qquad h(\partial_s)=(\partial_s u\, u^{-1}, z(t,s)).
\end{equation}
Differentiating $z(t,s)^n=u(t,s)\cdot z^n$
in both variables gives the anchor map equation for $h$. Commutativity of mixed derivatives allows for $h$ to satisfy the compatibility condition in \cite{crainic2004integrabilityliebrackets}, and hence it is a Lie algebroid morphism. Since $u(t,s)$ fixes the endpoints, its $s$-derivative vanishes at $t=0,1$, and so $h$ satisfies the boundary conditions in equation \eqref{bc}. It is therefore an $\mathcal{A}_n$ homotopy.
Conversely, given an $\mathcal{A}_n$-homotopy $h(t,s)$ with base path $z(t,s)$, integration for fixed $s$ yields a unique smooth path $u(t,s)$ with $u(0,s)=e$. The boundary conditions for $h(t,s)$ give:
\begin{equation*}
\partial_su(0,s)=\partial_su(1,s)=0,
\end{equation*}
so $u(t,s)$ has fixed endpoints. Similarly, using also the anchor equation, we get
\begin{equation*}
    \partial_sz(0,s)=0,
\end{equation*}
and hence we can fix $z(0,s)$ to be $z$. As before, for each $s$, $z(t,s)^n$ and $u(t,s)\cdot z^n$ solve the same ODE, and uniqueness gives $z(t,s)^n=u(t,s)\cdot z^n$.
Since $z(t,s)$ is a path in $\Dbarstar$, then the left hand side never vanishes. Therefore $u(t,s)$ is a homotopy as defined in Definition \ref{homotopy}.
Then $\bar\Phi$ descends to the bijection
\begin{equation*}
    \Phi:\mathcal{G}_n'=X_n/\!\sim'\;\longrightarrow\;\mathcal{P}(\mathcal{A}_n)/\!\sim_{\mathcal{A}_n}=\mathcal{G}_{}(\mathcal{A}_n).
\end{equation*}

Finally, let us verify that the groupoid structure is preserved under $\Phi$.
\begin{itemize}
\item \textbf{Source and target:} For $[(u,z)]\in\mathcal{G}_n'$, $\sigma'([(u,z)])=z$ and $\tau'([(u,z)])=z(1)$ with $z(1)^n=u(1)\cdot z^n$. In $\mathcal{G}(\An)$ we have $\sigma([a])=\pi(a(0))=z$ and $\tau([a])=\pi(a(1))=z(1)$. Hence $\Phi$ preserves source and target.
\item \textbf{Multiplication:} Let \(\alpha=[(u,z)]\) and \(\beta=[(v,w)]\) with \(\tau(\alpha)=w\). In \(\mathcal{G}_n'\) their composition  $\beta \circ \alpha=([(v\circ u,z)]$ is
as in equation \eqref{pathconcat}.

The images of $\alpha$ and $\beta$ under $\Phi$ are represented by $\mathcal{A}_n$-paths:
\begin{equation}
    a_u=(u'(t)u^{-1}(t), z(t))\quad \text{and}\quad a_v=(v'(t)v^{-1}(t), w(t)).
\end{equation}
The concatenation of these two paths is
\begin{equation*}
    (a_v\circ a_u)=\begin{cases}
(2u'(2t)u(2t)^{-1}, z(2t))&0\le t\le \frac12,\\
(2v'(2t-1)v(2t-1)^{-1},w(2t-1))&\frac12\le t\le1.
\end{cases}
\end{equation*}

Direct computation shows that the $\mathcal{A}_n$-path associated with the composition $\beta \circ \alpha$ is
\begin{equation*}
    ((v\circ u)'(t)(v\circ u)^{-1}(t),z(t)\circ w(t))=\begin{cases}
(2u'(2t)u(2t)^{-1},z(2t))&0\le t\le \frac12,\\
(2v'(2t-1)v(2t-1)^{-1},w(2t-1))&\frac12\le t\le1.
\end{cases}
\end{equation*}
Therefore
$\Phi(\beta\circ\alpha)=\Phi(\beta)\circ\Phi(\alpha)$.
\item Smooth structure:
Finally, since $\mathcal{A}_n$ is integrable, Theorem \ref{severa-weinstein} gives $\mathcal{G}(\mathcal{A}_n)$ its smooth structure. We can transport this smooth structure to $\mathcal{G}_n'$ through the bijection $\Phi$.
\end{itemize}
Consequently, $\Phi$ is a Lie groupoid isomorphism.
\end{proof}

\begin{proof}[Proof of Theorem \ref{gn'}]
  By the uniqueness (up to isomorphism) argument in Theorem \ref{severa-weinstein}, we can conclude that $\mathcal{G}'_n$ is the source simply connected groupoid integrating the action Lie algebroid $\mathcal{A}_n$.
\end{proof}
\begin{definition}[]
    We define now the groupoid $\mathcal{G}_{n} \coloneq X_{n} / \sim_{n}$.
    The equivalence relation $\sim_n$ includes composition with loops in $\psu$  winding $n$-times around the origin:
    \begin{equation*}
    \sim_{n} \coloneq \{ \sim , \circ\,  e^{\pi n\mathcal{H}t} \}
    \end{equation*}
    where $\mathcal{H} = \begin{pmatrix} 0 & -1 \\ 1 & 0 \end{pmatrix} \in \slR$ is the elliptic generator. The source, target and multiplication map descend to this quotient. $\mathcal{G}_n$ inherits a well-defined groupoid structure with source and target maps:
    \begin{equation}
        \sigma([u,z])=z, \hspace{9mm} \tau([u,z])=z(1).
    \end{equation}
\end{definition}

\begin{theorem}
\label{thm_G_n_integrates_A_n}
    The groupoid $\mathcal{G}_n$ integrates the Lie algebroid $\mathcal{A}_n.$
\end{theorem}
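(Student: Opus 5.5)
The plan is to realize $\mathcal{G}_n$ as the quotient of the source-simply connected groupoid $\mathcal{G}_n'$ of Theorem \ref{gn'} by a discrete normal subgroupoid contained in the isotropy, acting freely and properly on source fibers. Such a quotient of a Lie groupoid is again a Lie groupoid, and it has the same Lie algebroid, because the quotient map is a local diffeomorphism that is the identity near the unit section.

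\textbf{Step 1: the loop gives an isotropy element.} First I check that $\ell=e^{\pi n\mathcal{H}t}$, $t\in[0,1]$, defines for each $z\in\Dbarstar$ an element $[(\ell,z)]\in\mathcal{G}_n'$. The one-parameter group $e^{s\mathcal{H}}$ acts on the disk by rotations $w\mapsto e^{i c s}w$. So $\ell(t)\cdot z^n\neq0$ for all $t$, i.e.\ $(\ell,z)\in X_n$. The lifted path $z(t)$ of Proposition \ref{lift} is a rotation of $z$ by $1/n$ of the angle traversed downstairs. With the normalization of $\mathcal{H}$ in $\psu$, where $\pi$ corresponds to one full turn, the path $\ell$ rotates $z^n$ by $2\pi n$ in total, and hence $z(1)=z$. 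Therefore $[(\ell,z)]$ lies in the isotropy group at $z$. This is exactly where the exponent $n$ is needed: a loop winding fewer times would lift to a non-closed path.

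\textbf{Step 2: the subgroupoid is central and normal.} Let $N\subset\mathcal{G}_n'$ consist of the classes $[(\ell^k,z)]$ for $k\in\mathbb{Z}$ and $z\in\Dbarstar$. Since $\ell(1)=e$, concatenation gives $(u\circ\ell)(t)$ equal to $\ell$ followed by $u$. I would homotope this to $u$ followed by $\ell\cdot u(1)$, and then to the conjugate loop $u(1)\ell u(1)^{-1}$. The latter is homotopic to $\ell$ through loops $g_s\ell g_s^{-1}$, where $g_s$ is a path from $e$ to $u(1)$. This shows $N$ is central, hence normal, and $\sim_n$ is precisely the quotient by $N$. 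Consequently $\sigma$, $\tau$ and the multiplication descend, as the definition asserts.

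\textbf{Step 3: discreteness and freeness.} Through $\Phi$ of Proposition \ref{isoSW}, $N$ maps to $\mathcal{A}_n$-paths. I must show that $N$ is an embedded, closed, discrete submanifold of $\mathcal{G}_n'$ lying in the isotropy, and that right multiplication by $N$ acts freely and properly on each source fiber $\sigma^{-1}(z)$. Freeness and discreteness should follow by projecting a class $[(u,z)]$ to $\widetilde{\psu}$, via the endpoint of the lift of $u$ to the universal cover. Under this projection $\ell^k$ goes to the central element $c^{nk}$ of $\pi_1(\psu)\cong\mathbb{Z}$, and these elements are distinct for distinct $k$. Moreover, $N$ is the image of the smooth section $z\mapsto[(\ell^k,z)]$ of $\sigma$, which exhibits it as an embedded copy of $\mathbb{Z}\times\Dbarstar$. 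Proper discontinuity of the action on source fibers then follows from the discreteness of $\{c^{nk}\}$ in $\widetilde{\psu}$.

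\textbf{Step 4: conclusion.} Given Steps 1–3, $\mathcal{G}_n=\mathcal{G}_n'/N$ is a Lie groupoid, and $\mathcal{G}_n'\to\mathcal{G}_n$ is a local diffeomorphism that is a covering on source fibers and is compatible with the structure maps. It therefore induces an isomorphism $\ker(T\tau)|_M$ of Lie algebroids, so $\mathcal{G}_n$ integrates $\mathcal{A}_n$.

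The main obstacle is Step 3. The smooth structure on $\mathcal{G}_n'$ was only transported from the \v{S}evera--Weinstein groupoid, so showing that $N$ is closed and embedded and that the action is proper requires a concrete chart. I would construct one by working locally, where $\mathcal{G}_n'$ is locally a cover of the action groupoid of $\widetilde{\psu}$ on the $n$-fold chart.
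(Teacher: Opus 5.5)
Your overall route is the paper's: $\mathcal{G}_n$ is $\mathcal{G}_n'$ modulo the central subgroupoid generated by $[(\ell,z)]$, and the quotient map is a local diffeomorphism that induces an isomorphism of algebroids. Your Step 3 even supplies freeness and properness checks that the paper only asserts. The gap is in Step 2. Passing to the conjugate loop $u(1)\ell u(1)^{-1}$ and then homotoping through $g_s\ell g_s^{-1}$ is not a homotopy for $\sim'$, and the claim it is meant to support is false in $\mathcal{G}_n'$. The loop $g\ell g^{-1}$ consists of rotations about $g\cdot 0$, so $g\ell(t)g^{-1}\cdot w^n$ runs $n$ times around the hyperbolic circle about $g\cdot 0$ through $w^n$. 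As $s$ varies, this circle can sweep across $0$, which violates the constraint. Concretely, take $w^n\in(0,1)$ and $g$ a hyperbolic translation along the real axis with $g\cdot 0\in(w^n,1)$. Since $w^n$ lies on the geodesic segment from $0$ to $g\cdot 0$, the origin lies outside that circle. Hence the lifted base loop $z(t)$ of $(g\ell g^{-1},w)$ has winding number $0$ about the origin, whereas that of $(\ell,w)$ has winding number $1$. An admissible homotopy lifts to a homotopy of the paths $z(t)$ in $\Dbarstar$ rel endpoints, so this winding number is a $\sim'$-invariant, and therefore $[(g\ell g^{-1},w)]\neq[(\ell,w)]$. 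Homotopy in $\psu$ alone proves nothing here; the constraint $u(t,s)\cdot z^n\neq 0$ is the entire content of $\mathcal{G}_n'$.

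Fortunately the detour is unnecessary. By \eqref{pathconcat}, ``$\ell$ followed by $u$'' is $[(u,z)]\circ[(\ell,z)]$, and ``$u$ followed by $\ell\cdot u(1)$'' is literally $[(\ell,z(1))]\circ[(u,z)]$. So your first homotopy already gives centrality, provided it is admissible, and that is exactly what you have not checked. To do so, slide the loop along $u$: use $u(2t)$ on $[0,s/2]$, then $\ell(2t-s)\,u(s)$ on $[s/2,(s+1)/2]$, then $u(2t-1)$ on $[(s+1)/2,1]$. This is the homotopy $F$ of Lemma \ref{loops}. It is admissible because $\ell$ acts by rotations about $0$, so $|\ell(2t-s)u(s)\cdot z^n|=|u(s)\cdot z^n|\neq 0$. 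Fixing $0$ is precisely the property that the conjugates of $\ell$ lack. For Step 3 you can avoid explicit charts. By Higher Lie II, the algebroid morphism $\mathcal{A}_n\to\slR$ integrates to a smooth morphism $\mathcal{G}_n'\to\widetilde{\psu}$, and this morphism is your endpoint map. In addition, $z\mapsto[(\ell,z)]$ is the exponential bisection of the constant section $\pi n\mathcal{H}$, whose anchor is the complete rotation field, so this section is smooth.
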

To prove this theorem, we need first the following preliminary Lemma:
\begin{lemma}\label{loops}
   Let $c_n(t) = e^{\pi n\mathcal{H}t}$ be the loop in $\psu$, such that the path $c_n(t)\cdot z^n$ winds $n$ times around the origin. For any $z \in \mathbb{D}_*$, the classes $[(c_n^k, z)], k\in \mathbb{Z}$,  form a central subgroupoid of $\mathcal{G}_n'$.
\end{lemma}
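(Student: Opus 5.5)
The plan is to exhibit $\{[(c_n^k,z)]\}$ as a family of elements of the isotropy groups of $\mathcal{G}_n'$, one family over each base point $z$, and then verify three things: they are arrows from $z$ to $z$, they are closed under composition and inversion, and they commute with every arrow of $\mathcal{G}_n'$ in the groupoid sense. Concretely, for $\gamma=[(u,z)]$ with $\tau'(\gamma)=w$ we want $[(c_n^k,w)]\circ\gamma=\gamma\circ[(c_n^k,z)]$.

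\textbf{Step 1 (isotropy).} Since $\mathcal{H}$ generates the rotations, $e^{\pi n\mathcal{H}t}$ acts on the disk by $\zeta\mapsto e^{2\pi i n t}\zeta$, up to the sign convention fixed by $\mathcal{H}$. In particular $c_n(t)\cdot z^n=e^{2\pi i n t}z^n$, which never vanishes when $z\neq 0$, so $(c_n,z)\in X_n$. By uniqueness in Proposition \ref{lift}, the lift is $z(t)=e^{2\pi i t}z$, hence $z(1)=z$, and $[(c_n,z)]$ is an arrow from $z$ to $z$. For the powers, $c_n^k$ is the $k$-fold concatenation as in \eqref{pathconcat}, which is homotopic to $e^{\pi nk\mathcal{H}t}$. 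The inverse is given by the reversed path. Closure under the product follows because the $c_n$ are one-parameter subgroups, so $c_n^k\circ c_n^l\sim c_n^{k+l}$. This gives a subgroupoid, namely a bundle of groups, each a quotient of $\mathbb{Z}$.

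\textbf{Step 2 (centrality).} We compare the representatives of the two sides. The left side is $u$ followed by $t\mapsto c_n(t)u(1)$. The right side is $c_n$ followed by $t\mapsto u(t)c_n(1)$, and $c_n(1)=e^{\pi n\mathcal{H}}$ is central in $\psu$ (it is $\pm 1$, hence trivial in $\psu$), so this is just $u(t)$. We then build the standard square homotopy $H(t,s)$: traverse $c_n(\cdot)$ up to parameter $s$, then $u(\cdot)c_n(s)$, then $c_n(\cdot)$ from $s$ to $1$, all multiplied appropriately. Equivalently, $H(t,s)=c_n(\phi_s(t))\,u(\psi_s(t))$ for suitable reparametrizations $\phi_s,\psi_s$ interpolating between the two orders. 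This is an endpoint-fixed homotopy, since $c_n(1)u(1)=u(1)c_n(1)$.

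\textbf{Main obstacle.} The key constraint is $H(t,s)\cdot z^n\neq 0$ for all $t,s$. Here I would use that $c_n$ consists of rotations, so it never moves a nonzero point to $0$:
\[
H(t,s)\cdot z^n = e^{2\pi i n\phi_s(t)}\,\big(u(\psi_s(t))\cdot z^n\big)\neq 0,
\]
because $u(\psi_s(t))\cdot z^n\neq 0$ by the assumption $(u,z)\in X_n$. This uses crucially that $c_n(t)$ commutes with no interference: rotations preserve $\Dbarstar$ and fix the origin.

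Finally, since rotation loops are central in $\pi_1$ of the relevant path space, the classes for different $k$ multiply compatibly with $\circ$. This gives a central subgroupoid of $\mathcal{G}_n'$, which is what is needed so that the quotient $\sim_n$ is compatible with the source, target and multiplication maps.
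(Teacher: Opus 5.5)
Your proposal is correct and is essentially the paper's argument. Both compare the two composites, use $c_n(1)=e$, and give an endpoint-fixed homotopy through elements $c_n(\phi)\,u(\psi)$, which is admissible because $c_n$ acts by rotations about the origin. The paper slides the whole loop $c_n(\cdot)\,u(s)$ along $u$, while you split $c_n$ at $s$; these are two staircase homotopies inside the same square $(\phi,\psi)\mapsto c_n(\phi)\,u(\psi)$.

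One small fix: your verbal description of the middle piece as $u(\cdot)\,c_n(s)$ has the factors reversed. It should be $c_n(s)\,u(\cdot)$, as in your formula $H(t,s)=c_n(\phi_s(t))\,u(\psi_s(t))$, because only with the rotation applied last are the junctions continuous and $H(t,s)\cdot z^n\neq 0$ guaranteed.
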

\begin{proof}
Consider any composable path $[(u, z)]$ in the groupoid. Because $c_n(t)$ represents the generator of the fundamental group $\pi_1(\psu) \cong \mathbb{Z}$, concatenating with $c_n(t)$ commutes up to path homotopy. Explicitly, checking the two compositions yields:
 \begin{equation}
   \gamma_0(t)\coloneqq u(t) \circ c_n(t)\  = 
    \begin{cases}
      c_n(2t) & t \in [0,\frac{1}{2}]\\
     u(2t-1) & t \in [\frac{1}{2}, 1]
    \end{cases}, 
    \end{equation}
and 
     \begin{equation}
 \gamma_1(t) \coloneq  c_n(t) \circ u(t)\ = 
    \begin{cases}
      u(2t) & t \in [0,\frac{1}{2}]\\
      c_n(2t-1).u(1) & t \in [\frac{1}{2}, 1]
    \end{cases}.   
    \end{equation}
Because $c_n(1) = e^{\pi n \mathcal{H}} = e \in \psu$, these two resulting concatenated paths are smoothly homotopic relative to their endpoints, with homotopy: 
\begin{equation*}
    F(s,t)= 
    \begin{cases}
        u(2t) & 0 \leq t \leq \frac{s}{2} \\
        c_n (2t-s).u(s) & \frac{s}{2} \leq t \leq \frac{s+1}{2}\\
        u(2t-1) & \frac{s+1}{2}\leq t \leq 1.
    \end{cases}
\end{equation*}
One can easily verify that $F(0,t)=\gamma_0$, $F(1,t)=\gamma_1$. It remains to show that it is an admissible homotopy. On the first and third pieces, the nonvanishing condition follows from $u(t)\cdot z^n \neq 0$. On the middle piece 
\begin{equation*}
    c_n(2t-s).(u(s)\cdot z^n)\neq 0
\end{equation*}
as $c_n$ acts by rotations. 
 \end{proof}

With this lemma, the proof of Theorem \ref{thm_G_n_integrates_A_n} follows easily:
\begin{proof}
First, note that the action of the path $c_n(t)$ leaves the endpoints fixed, meaning the source and target of $[(c_n, z)]$ are both $z$, i.e. it belongs to the isotropy group of $\mathcal{G}_n'$ at $z$. By Lemma \ref{loops}, the equivalence relation $\sim_n$ corresponds to quotienting $\mathcal{G}_n'$ by the central, discrete subgroupoid generated by $[(c_n, z)]$. The quotient of a Lie groupoid by a discrete, normal subgroup of its isotropy bundle inherits a unique smooth structure such that the projection map $\pi: \mathcal{G}_n' \to \mathcal{G}_n$ is a local diffeomorphism. 
Let us denote by 
$\mathcal{A}(\mathcal{G}_n')$ and $\mathcal{A}(\mathcal{G}_n)$ the Lie algebroids integrating to $(\mathcal{G}_n')$ and $(\mathcal{G}_n)$ respectively.
The induced map 
\begin{equation*}
    \mathcal{A}(\pi): \mathcal{A}(\mathcal{G}_n') \rightarrow \mathcal{A}(\mathcal{G}_n)
\end{equation*}
 is then an isomorphism. Since $\mathcal{A}(\mathcal{G}_n') \cong \mathcal{A}_n$ by Theorem \ref{gn'}, it follows that $\mathcal{A}(\mathcal{G}_n) \cong \mathcal{A}_n$. Therefore $\mathcal{G}_n$ integrates the Lie algebroid $\mathcal{A}_n$.
\end{proof}    
\subsection{Source fibers and action groupoid}
To describe the source fibers we need to distinguish between points on the interior of the punctured disk and on its boundary.

First consider a point $z \in \D_*$ in the interior of the disk.
We recall here the Iwasawa decomposition\footnote{Compared to Gauss decomposition, Iwasawa decomposition has the advantage of being global.} of a $\psu$ element
\begin{equation*}
    g=kan, \quad k \in K,\ a \in A,\ n\in N
\end{equation*}
where $K$ is a compact subgroup, $A$ is the subgroup of diagonal matrices with unit determinant and $N$ 
is the subgroup of upper triangular matrices with ones on the diagonal.  
\cite{Valach_2020}. Any element in $\psu$ can be factorized using this decomposition, which is unique.
Furthermore, because all elliptic subgroups in $\psu$ are conjugate to one another, one has $\psu = (AN) G_{z^n}$, where $AN \cong \D$ and $G_{z^n} = \text{Stab}_G(z^n) \cong S^1$ is the stabilizer of the point $z^n$ under Möbius transformations. 
Hence there is a unique presentation of paths $u(t) = a(t) \cdot b(t)$, where $a(t)\in AN$ and $b(t)\in G_{z^n}$. 
Modulo homotopy avoiding $0\in \D$, we obtain a description of source fibers of the Lie groupoid $\mathcal{G}_n'$:
\begin{equation*}
    \sigma'^{-1}(z) \cong \tilde{G}_{z^n} \times \tilde{\D}_* = \R \times \R \times (0, 1),
\end{equation*}
where $\tilde{\D}_* \cong \mathbb{R} \times (0,1)$ is the universal cover of the punctured disk. The equivalence relation corresponding to the path $e^{\pi nHt}$ defines an action of $\mathbb{Z}$ on $\R \times \R \times (0,1)$:
\begin{equation*}
    (\phi, \psi, r) \sim (\phi + 2\pi n, \psi + 2\pi n, r)
\end{equation*}
Therefore, we obtain two presentations of the source fiber of $\mathcal{G}_n$:
\begin{equation}\label{sfiberboundary}
    \sigma^{-1}(z) \cong \R \times \D_*^{(n)} \cong S^1 \times \tilde{\D}_*,
\end{equation}
where $\D_*^{(n)}$ is the $n$-fold cover of the punctured disk.

For a boundary point $z \in \partial \Dbarstar$, we use the decomposition
 $\psu = S^1 G_{z^n}$ to obtain
$u(t) = c(t) \cdot d(t)$ with $c(t)\in S^1$ and $d(t)\in G_{z^n}\cong AN$, the stabilizer of 
 a boundary point under Möbius. Now every homotopy avoids the origin since $u(t)\cdot z^n$ stays on the boundary. Hence, we obtain
\begin{equation*}
    \sigma'^{-1}(z) \cong \tilde{S}^1 \times \widetilde{AN} \cong \R \times AN = \widetilde{\psu}.
\end{equation*}
Factoring out paths of the form $e^{\pi n\mathcal{H}t}$, the universal cover of $\psu$ is replaced by its $n$-fold cover:
\begin{equation}\label{sfiberbulk}
    \sigma^{-1}(z) \cong \psun.
\end{equation}
We conclude that on the boundary $\mathcal{G}_n$ restricts to the action groupoid:
\begin{equation*}
    \mathcal{G}_{n} |_{\partial \Dbarstar} = \psun \times S^{1} \to S^{1}
\end{equation*}
\begin{theorem}
    The integrating groupoid $\mathcal{G}_n \rightrightarrows \Dbarstar$ is not an action Lie groupoid.
\end{theorem}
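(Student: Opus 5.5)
We argue by contradiction, using the explicit source fibers computed above. Suppose $\mathcal{G}_n \cong G\ltimes \Dbarstar$ for some Lie group $G$ acting on $\Dbarstar$. In an action groupoid every source fiber $\sigma^{-1}(z)=G\times\{z\}$ is diffeomorphic to $G$, so all source fibers are diffeomorphic to one another. Moreover, the Lie algebroid of $G\ltimes\Dbarstar$ is $\mathfrak g\ltimes \Dbarstar$. It must be isomorphic to $\mathcal{A}_n$ by Theorem \ref{thm_G_n_integrates_A_n}, so $G$ is a connected (source fibers of $\mathcal{G}_n$ are connected) $3$-dimensional Lie group. Which $G$ occurs does not matter for the argument; only the comparison of fibers does.

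The key step is to compare the two descriptions already derived. For a boundary point, \eqref{sfiberbulk} gives $\sigma^{-1}(z)\cong\psun$. This is homotopy equivalent to a circle, since $\psun\cong S^1\times AN$ with $AN$ contractible. For an interior point $z\in\D_*$, \eqref{sfiberboundary} gives $\sigma^{-1}(z)\cong S^1\times\tilde{\D}_*\cong S^1\times\R\times(0,1)$. This is also homotopy equivalent to a circle, so $\pi_1$ alone will not distinguish them. I would therefore compare finer invariants of the fibers, and I would try two. The first is the \emph{isotropy groups}. In an action groupoid the isotropy at $z$ is the stabilizer $G_z\subset G$. At a boundary point the isotropy of $\mathcal{G}_n$ is the stabilizer of $z$ in $\psun$, which is $1$-dimensional. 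At an interior point, a loop $[(u,z)]$ with $z(1)=z$ requires $u(1)\in G_{z^n}$ together with a winding condition. I would compute it from the decomposition $u=a\cdot b$ and expect a $1$-dimensional group isomorphic to $\R$ or to an $n$-fold cover of $S^1$. The second is the orbit structure. The orbits of $\mathcal{G}_n$ are $\D_*$ and $S^1$.

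The cleanest contradiction comes from the $\mathbb{Z}$-quotient structure. In $\mathcal{G}_n'$ the boundary fiber is $\widetilde{\psu}$, a group, while the interior fiber is $\tilde G_{z^n}\times\tilde\D_*$. Suppose $\mathcal{G}_n=G\ltimes\Dbarstar$. Then the boundary fiber $\psun$ must be $G$ as a manifold, and the boundary restriction is $\psun\ltimes S^1$. The action map $G\times\Dbarstar\to\Dbarstar$ is smooth, and its infinitesimal generators are the vector fields $E,F,H$ of Proposition \ref{vfn}. An action of a connected group would make these fields complete on $\Dbarstar$. But the flow of $E$ starting at a real point $0<x<1$ stays on the real axis and satisfies $\dot x=\frac1n(x^{1-n}-x^{1+n})$ on one side, or the negative of this on the other depending on the sign convention. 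In the direction of decreasing $|x|$ it reaches $x=0$ in finite time, since $\int x^{n-1}dx$ converges. So $E$ is incomplete, and it cannot be the fundamental vector field of a global action of a Lie group with Lie algebra $\mathfrak{sl}_2(\R)$.

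The step I expect to be the main obstacle is making sure the action-groupoid structure must induce exactly the anchor $\rho_n$. A priori, an abstract isomorphism $\mathcal{G}_n\cong G\ltimes M$ could reidentify the base or the algebroid. I would handle this by noting that any action groupoid structure on $\mathcal{G}_n\rightrightarrows\Dbarstar$ identifies $\mathcal{A}_n$ with $\mathfrak g\ltimes\Dbarstar$, with anchor given by complete vector fields. The image of the anchor is the same singular distribution spanned by $E,F,H$ (all of $T\Dbarstar$ here), so this alone gives no contradiction. The real issue is that a different trivialization of $\mathcal{A}_n$ might yield complete fields. To rule this out, I would use the boundary: there $G\cong\psun$ must act as in \eqref{nformula}. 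I would then argue that a fixed $\mathfrak{sl}_2(\R)$ trivialization compatible with the boundary is forced to be the standard one, up to a gauge that is constant along the boundary circle. Its generator $E$ is incomplete as computed above, which gives the contradiction. Justifying this rigidity step carefully is the part I am least sure of.
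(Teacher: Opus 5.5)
Your working argument is correct and is essentially the paper's. If the action-groupoid structure has $E,F,H$ as its fundamental vector fields, those fields must be complete. But along the flow of $E$ one has $\frac{d}{dt}z^n=1-z^{2n}$, so a real point $0<x<1$ reaches the puncture in finite backward time.

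The paper's proof starts from the same hypothesis, that $\rho_n$ integrates to a global action of some $H$. It obtains a covering $q\colon H\to\psu$ with $(h\cdot z)^n=q(h)\cdot z^n$, and uses transitivity of the M\"obius action to find $g$ with $g\cdot z^n=0$. Any lift of $g$ would then move $z$ into the puncture. Your version is the same obstruction, seen along a one-parameter subgroup. It is slightly more economical, because completeness of fundamental fields needs neither $q$ nor the compatibility relation. (The paper's approximating sequence $g_k\to g$ is superfluous anyway: the relation applied directly to a lift $\tilde g$ already gives $(\tilde g\cdot z)^n=0$.)

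The step you flag as the main obstacle is that an abstract isomorphism $\mathcal{G}_n\cong G\ltimes\Dbarstar$ must induce $\rho_n$ in the standard $\slR$-trivialisation. The paper does not prove this either; it is built into its contradiction hypothesis. Measured against the paper's proof, you do not need it. For the literal statement, however, it is a genuine gap, and your rigidity sketch does not close it, because boundary compatibility does not determine the interior action.

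Here is a concrete example. Put $\Phi_g(\phi)=\tfrac1nF_g(n\phi)$. Then $g\cdot(\phi,\epsilon)=(\Phi_g(\phi),\,\Phi_g(\phi+\epsilon)-\Phi_g(\phi))$ is a smooth action of $\psun$ on $\{(\phi,\epsilon)\,:\,\phi\in\R/2\pi\Z,\ 0\le\epsilon<2\pi/n\}\cong\Dbarstar$; it is the action on ordered pairs of points of the circle. This action:
\begin{itemize}
\item restricts to the $n$-fold M\"obius action on the boundary;
\item has orbits $\D_*$ and $S^1$ and source fibres $\psun$;
\item has isotropy $\R$ at interior points and a Borel subgroup at boundary points.
\end{itemize}
All of this matches $\mathcal{G}_n$, yet its fundamental vector fields are complete. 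So neither orbits, isotropy, source-fibre topology nor boundary data can settle the question; you would need a finer invariant of $\mathcal{A}_n$ along $\partial\Dbarstar$.

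Two smaller slips in your exploratory part:
\begin{itemize}
\item The boundary isotropy is $2$-dimensional (the stabilizer of a boundary point, a Borel subgroup), not $1$-dimensional.
\item At boundary points the image of the anchor is $TS^1$, not all of $T\Dbarstar$.
\end{itemize}
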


\begin{proof}
Let us consider the groupoid  $(\psu\times \Dbar)|_{\Dbarstar}$ with source map $\sigma(g,z)=z$, target map $\tau(g,z)=g\cdot z$ for all $(g,z) \in \psu \times \Dbar$, and multiplication map for any two composable arrows
    \begin{equation*}
(g_1,z)\circ(g_2,w)=(g_1\circ g_2, w).
    \end{equation*}
Consider the $n$-fold covering map
\begin{align*}
\begin{split}
    \pi:  \mathcal{G}_n &\to (\psu \times \Dbar)|_{\mathbb{D}_{*}} \\
[(u(t),z)] &\mapsto (u(1),z^n).
\end{split}
\end{align*}
Let us suppose by contradiction, that the infinitesimal action of $\rho_n$ in (\ref{actionLiealg}) integrates to global action of a Lie group $H$ on $\Dbarstar$, that is $\mathcal{G}_n \cong H\ltimes \Dbarstar$. Then the Lie algebra of $H$ is isomorphic to $\mathfrak{sl}_2(\mathbb{R})$ and there is a covering map $q:H \rightarrow \psu$. The $H$-action is compatible with the Möbius action:
\begin{equation}\label{haction}
    (h \cdot z)^n=q(h)\cdot z^n.
\end{equation}
Because $\psu$ acts transitively on $\Dbar$, there exists $g \in \psu$ such that
\begin{equation*}
    g\cdot z^n=0.
\end{equation*}
Choose a converging sequence of group elements $g_k \rightarrow g$ such that
\begin{equation}\label{sequence}
    \lim_{k \to \infty}(g_k\cdot z^n)=0  
\end{equation}
and such that
\begin{equation*}
    g_k\cdot z^n\neq 0, \quad \forall k.
\end{equation*}
 Fix a lift $\tilde{g} \in q^{-1}(g)$. There exist lifts $\tilde{g_k} \in H$ of $g_k$ such that
 \begin{equation*}
     q(\tilde{g_k})=g_k, \qquad \tilde{g_k}\rightarrow\tilde{g}.
 \end{equation*}
 Let $w_k=\tilde{g_k}\cdot z$. By equation \eqref{haction}
\begin{equation*}
w_k^n=q(\tilde{g_k})\cdot z^n=g_k \cdot z^n \rightarrow g \cdot z^n=0.
\end{equation*}
Then
\begin{equation}
w_{k}\rightarrow 0.
\end{equation}
However, continuity of $H$-action gives
\begin{equation*}
w_{k}\rightarrow \tilde{g}\cdot z \in \Dbarstar
\end{equation*}
which represents a contradiction because $0 \notin \Dbarstar$. Consequently, $\mathcal{G}_n$ cannot be an action Lie groupoid. 
\end{proof}

\begin{remark}
    Note that $(\psu \times \Dbar)|_{\Dbarstar}$ is not an action groupoid either. Proving this is more direct than the $\mathcal{G}_n$ case above. We can compute the source fibers exploiting again the Iwasawa decomposition. As before, there are two cases:
\begin{enumerate}
    \item  $z \in \text{int}(\Dbarstar)$. We obtain $\sigma^{-1}(z)=S^1 \times \D_*$.
    \item $z \in \partial(\Dbarstar)$. We obtain $\sigma^{-1}(z)=AN \times S^1 \cong \psu$.
\end{enumerate}
 Topologically these fibers are different:
 $\pi_1(S^1 \times \D_*)=\mathbb{Z}\times \mathbb{Z}$, and $\pi_1(\psu)=\mathbb{Z}$. Therefore,  $(\psu\times \mathbb{D})|_{\Dbarstar}$ cannot be written as a smooth action of a Lie group $G$ on the whole domain. Note that this argument does not work for $\mathcal{G}_n$, as the fibers in \eqref{sfiberboundary} and \eqref{sfiberbulk} have the same fundamental group.
\end{remark}

\section{A correspondence picture}\label{section5}
We present in this last section a different point of view on the problem, based on correspondences. For a fixed group element $g = \begin{pmatrix}
    a & b\\ \bar{b} & \bar{a}
\end{pmatrix}\in \psu$ the equation \eqref{nformula} defines a ramified Riemann surface as in Figure \ref{fig:surface}, consisting of $n$ sheets. 
We can embed the Riemann surface into the product $\Dbar \times \Dbar$ as the correspondence:
\begin{equation*}
\begin{tikzcd}
& S_g \arrow[dl, "\pi_1"'] \arrow[dr, "\pi_2"] & \\
\Dbar & & \Dbar
\end{tikzcd}
\end{equation*}
where $S_g$ is the surface defined by solutions $(z,w)$ of the equation
\begin{equation}
    S_g=\{(z,w)\mid w^n=g\cdot z^n\}.
\end{equation}
Composition yields
\begin{equation}
    S_{g_2} \circ S_{g_1}= \{(z,v)\mid \exists w \in \D, (z,w)\in S_{g_1} \text{ and } (w,v)\in S_{g_2}  \}\subseteq S_{g_2g_1}.
\end{equation}
In fact, one can show that in this case $S_{g_2} \circ S_{g_1}= S_{g_2g_1}$:
\begin{proof}
    \noindent$(\subseteq)$ Let $(z,v) \in S_{g_2}\circ S_{g_1}$, so there exists 
$w\in\mathbb{D}$ with $w^n = g_1\cdot z^n$ and $v^n = g_2\cdot w^n$. 
Substituting the first into the second gives
\begin{equation*}
v^n = g_2\cdot(g_1\cdot z^n) = (g_2 g_1)\cdot z^n,
\end{equation*}
where the last equality uses associativity of the $\psu$ action by 
M\"{o}bius transformations. Hence $(z,v)\in S_{g_2 g_1}$.

\medskip
\noindent$(\supseteq)$ Let $(z,v)\in S_{g_2 g_1}$, so $v^n = (g_2 g_1)\cdot z^n$. 
Define $w\in\mathbb{D}$ by
\begin{equation*}
w^n := g_1\cdot z^n.
\end{equation*}
Since $g_1\in \psu$ preserves $\mathbb{D}$, we have $g_1\cdot z^n\in\mathbb{D}$, 
so $w$ exists in $\mathbb{D}$ (any choice of $n$-th root satisfies $|w|<1$ 
since $|w^n|<1$). By construction $(z,w)\in S_{g_1}$, and
\begin{equation*}
g_2\cdot w^n = g_2\cdot(g_1\cdot z^n) = (g_2 g_1)\cdot z^n = v^n,
\end{equation*}
so $(w,v)\in S_{g_2}$. Hence $(z,v)\in S_{g_2}\circ S_{g_1}$.
\end{proof}
\begin{figure}[htp]
    \centering
    \includegraphics[width=4cm]{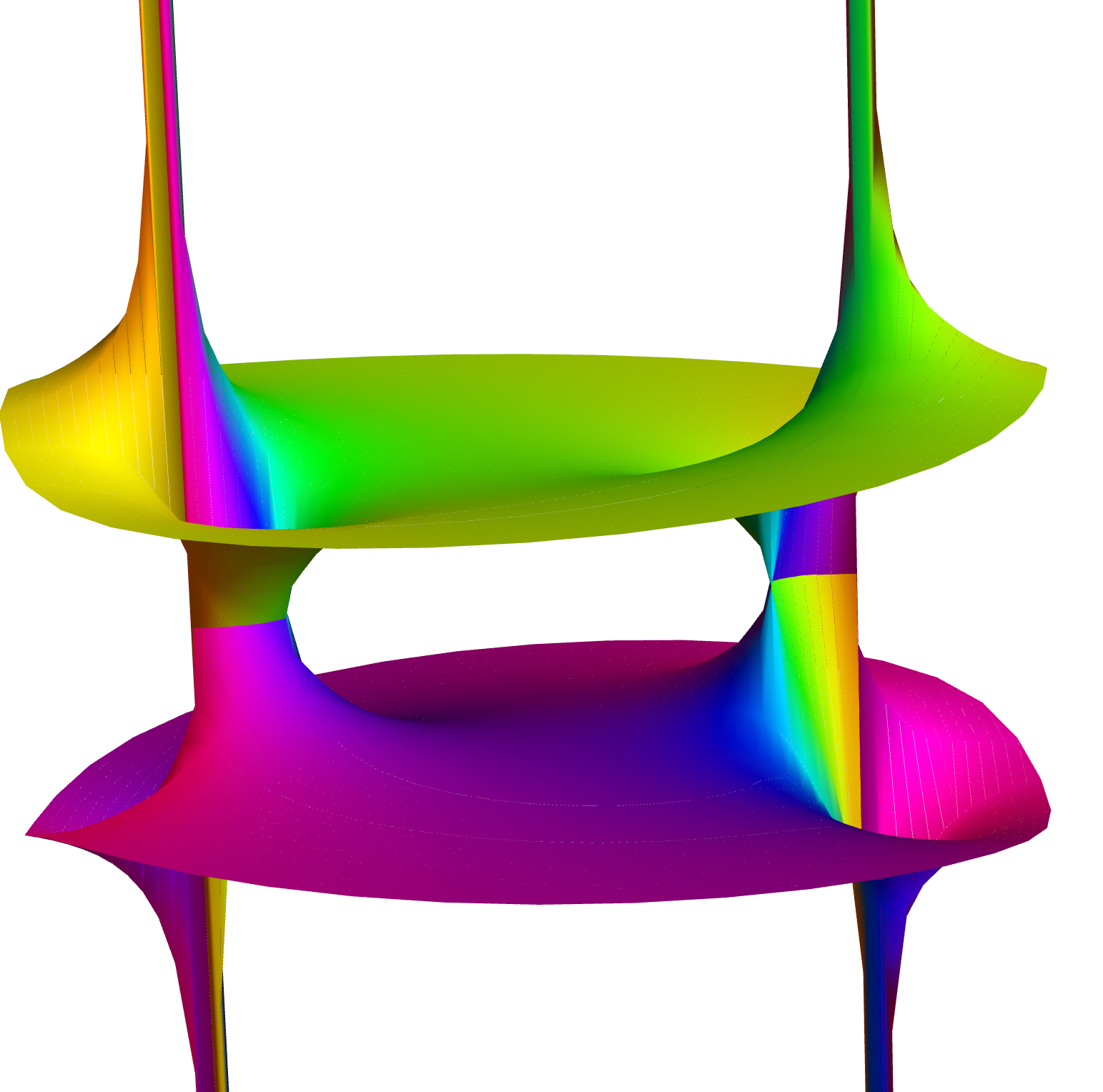}
    \caption{Projection of the ramified Riemann surface $w^2=\frac{\sqrt{2}z^2-i}{\sqrt{2}+iz^2}$.}
    \label{fig:surface}
\end{figure}

\begin{proposition}Fix $g=u(1)\in\psu$.
    There exists a forgetful map from the groupoid $\mathcal{G}_n$ to the correspondence $S_g$, sending composed arrows in the groupoid to a composition of correspondences.
\end{proposition}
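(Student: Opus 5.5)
The forgetful map will send an arrow to the pair formed by its source and its target. Concretely, define
\begin{equation*}
F:\mathcal{G}_n\longrightarrow \Dbar\times\Dbar,\qquad F([(u,z)])=(z,z(1))=(\sigma([u,z]),\tau([u,z])),
\end{equation*}
where $z(t)$ is the lift from Proposition \ref{lift}. First I will check that $F$ is well defined on $\mathcal{G}_n=X_n/\sim_n$. Under $\sim'$, a homotopy $u(t,s)$ with fixed endpoints and $u(t,s)\cdot z^n\neq 0$ lifts, by the same path-lifting argument applied in the $s$-variable, to a continuous family $z(t,s)$. The endpoint $z(1,s)$ satisfies $z(1,s)^n=u(1)\cdot z^n$ and so lies in the finite set of $n$-th roots. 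Being continuous in $s$, it is constant. Under composition with $c_n$, the endpoint is also unchanged, since Lemma \ref{loops} shows $[(c_n,z)]$ lies in the isotropy at $z$. Next, since $z(1)^n=u(1)\cdot z^n$, the pair $(z,z(1))$ satisfies the defining equation of $S_g$ with $g=u(1)$. Hence $F$ restricted to arrows with $u(1)=g$ lands in $S_g$. The map is ``forgetful'' because it discards the homotopy class of the path $u$ and keeps only its endpoint $g=u(1)$ and the sheet determined by $z(1)$.

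For compatibility with products, take composable arrows $\alpha=[(u_1,z)]$ and $\beta=[(u_2,w)]$ with $w=z(1)$. Write $g_1=u_1(1)$ and $g_2=u_2(1)$. Then $F(\alpha)=(z,w)\in S_{g_1}$ and $F(\beta)=(w,v)\in S_{g_2}$ with $v=w(1)$. Therefore $(z,v)\in S_{g_2}\circ S_{g_1}$. By the identity $S_{g_2}\circ S_{g_1}=S_{g_2g_1}$ established just above, this set is the correspondence of $(u_2\circ u_1)(1)=u_2(1)u_1(1)=g_2g_1$, using \eqref{pathconcat}. It remains to check that $F(\beta\circ\alpha)=(z,v)$. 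The path $u_2\circ u_1$ runs through $u_1(2t)$ first, so its lift starts at $z$ and reaches $w$ at $t=\tfrac12$. On $[\tfrac12,1]$ the lift of $u_2(2t-1)u_1(1)\cdot z^n=u_2(2t-1)\cdot w^n$ starting at $w$ is, by uniqueness in Proposition \ref{lift}, the reparametrized $w(t)$. It therefore ends at $v$. This gives $F(\beta\circ\alpha)=F(\beta)\circ F(\alpha)$ as composition of relations. Units go to the diagonal $S_e$, and inverses go to transposed correspondences, $S_{g^{-1}}=S_g^{T}$.

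The step I expect to need the most care is well-definedness of the lifts, and of the endpoint $z(1)$, under homotopies. Here one must use that the admissible homotopies avoid $u(t,s)\cdot z^n=0$; otherwise the lift could jump sheets at the branch point. I would handle this by lifting the whole square $I\times I$ through the covering $p:\Dbarstar\to\Dbarstar$, which is possible because $I\times I$ is simply connected. This yields the continuous family $z(t,s)$ used above.
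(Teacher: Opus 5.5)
Your proposal is correct and takes essentially the same approach as the paper. The map is $[(u,z)]\mapsto(\sigma,\tau)=(z,z(1))$, it lands in $S_g$ because $z(1)^n=u(1)\cdot z^n$, and it respects products because the lift of $u_2\circ u_1$ is the concatenation of the two lifts; your checks that $(z,z(1))$ is constant on $\sim_n$-classes (lifting the square $I\times I$) and your restriction to arrows with $u(1)=g$ fill in details the paper leaves implicit. One small slip in your side remark: for $n>1$, $S_e=\{(z,w)\mid w^n=z^n\}$ is the union of the $n$ graphs $w=\zeta z$ with $\zeta^n=1$, so units land in the diagonal, which is only one component of $S_e$, not all of it.
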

\begin{proof}
    We can explicitly construct this map:
     \begin{equation}
        \begin{split}
    \phi: \mathcal{G}_n & \xrightarrow{} S_g \\
                [u,z] &\mapsto (\sigma([u,z]), \tau([u,z]))=(\pi_1(z,w), \pi_2(z,w)),\\
             \end{split}
  \end{equation}
  with $z,w \in \Dbar$.
  The map is well-defined as:
    \begin{equation}
        (\sigma([u,z]), \tau([u,z]))=(z, z(1))
    \end{equation}
    and since $z(1)^n=u(1) \cdot z^n$, then $\text{Im}(\phi)\in S_g$.
This map preserves the product structure:
\begin{equation*}
    \phi([u_1, z] \circ [u_2,w])=\phi([u_1\circ u_2, w])=(z, u_1(1)\cdot u_2(1)z)=(z, g_1g_2 \cdot z)=(\phi([u_1, z]), \phi([u_2, w])).
\end{equation*}
\end{proof}
\begin{remark}
The map in the opposite direction does not exist:  given a pair $(z,w) \in S_g$, we cannot reconstruct the path from $z$ to $w$ in the groupoid, as there exist different classes of paths $u_1, u_2$ with
\begin{equation}
    u_1(0)=u_2(0)=e \text{ and }
    u_1(1)=u_2(1)=g
\end{equation}
such that
\begin{equation}
    u_i(1)\cdot z^n=w^n, \quad i=1,2.
\end{equation}
    
\end{remark}

\printbibliography

\end{document}